\newtheorem{theorem}{Theorem}
\newtheorem{lemma}[theorem]{Lemma}
\newtheorem{assumption}[theorem]{Assumption}
\newtheorem{remark}[theorem]{Remark}
\newtheorem{algo}[theorem]{Algorithm}
\numberwithin{theorem}{section}
\numberwithin{equation}{section}
\renewcommand{\Re}{\operatorname{Re}}
\newcommand{\eq}{:=}
\newcommand{\pd}[2]{\frac{\partial #1}{\partial #2}}
\newcommand{\grad}{\boldsymbol \nabla}
\renewcommand{\div}{\grad \cdot}
\newcommand{\curl}{\grad \times}
\newcommand{\ccurl}{\boldsymbol{\operatorname{curl}}}
\newcommand{\ddiv}{\operatorname{div}}
\newcommand{\jmp}[1]{\,[\![#1]\!]}
\newcommand{\BA}{\boldsymbol A}
\newcommand{\BB}{\boldsymbol B}
\newcommand{\BE}{\boldsymbol E}
\newcommand{\BH}{\boldsymbol H}
\newcommand{\BI}{\boldsymbol I}
\newcommand{\BJ}{\boldsymbol J}
\newcommand{\BL}{\boldsymbol L}
\newcommand{\BW}{\boldsymbol W}
\newcommand{\BX}{\boldsymbol X}
\newcommand{\ba}{\boldsymbol a}
\newcommand{\bd}{\boldsymbol d}
\newcommand{\be}{\boldsymbol e}
\newcommand{\bj}{\boldsymbol j}
\newcommand{\bn}{\boldsymbol n}
\newcommand{\bo}{\boldsymbol o}
\newcommand{\bp}{\boldsymbol p}
\newcommand{\br}{\boldsymbol r}
\newcommand{\bu}{\boldsymbol u}
\newcommand{\bv}{\boldsymbol v}
\newcommand{\bw}{\boldsymbol w}
\newcommand{\bx}{\boldsymbol x}
\newcommand{\by}{\boldsymbol y}
\newcommand{\CF}{\mathcal F}
\newcommand{\CM}{\mathcal M}
\newcommand{\CN}{\mathcal N}
\newcommand{\CP}{\mathcal P}
\newcommand{\CQ}{\mathcal Q}
\newcommand{\CR}{\mathcal R}
\newcommand{\CS}{\mathcal S}
\newcommand{\CT}{\mathcal T}
\newcommand{\CV}{\mathcal V}
\newcommand{\LC}{\mathscr C}
\newcommand{\LD}{\mathscr D}
\newcommand{\LE}{\mathscr E}
\newcommand{\LP}{\mathscr P}
\newcommand{\LV}{\mathscr V}
\newcommand{\TI}{\textup I}
\newcommand{\BCN}{\boldsymbol{\CN}}
\newcommand{\BCP}{\boldsymbol{\CP}}
\newcommand{\BTI}{\pmb{\TI}}
\newcommand{\SlopeTriangle}[6]
{
    % #1. Relative offset in x direction.
    % #2. Width in x direction, so xA-xB.
    % #3. Relative offset in y direction.
    % #4. Slope d(y)/d(x).
    % #5. Label
    % #6. Plot options.

    \pgfplotsextra
    {
        \pgfkeysgetvalue{/pgfplots/xmin}{\xmin}
        \pgfkeysgetvalue{/pgfplots/xmax}{\xmax}
        \pgfkeysgetvalue{/pgfplots/ymin}{\ymin}
        \pgfkeysgetvalue{/pgfplots/ymax}{\ymax}

        % Calculate auxilliary quantities, in relative sense.
        \pgfmathsetmacro{\xArel}{#1}
        \pgfmathsetmacro{\yArel}{#3}
        \pgfmathsetmacro{\xBrel}{#1-#2}
        \pgfmathsetmacro{\yBrel}{\yArel}
        \pgfmathsetmacro{\xCrel}{\xArel}
        %\pgfmathsetmacro{\yCrel}{ln(\yC/exp(\ymin))/ln(exp(\ymax)/exp(\ymin))} % REPLACE THIS EXPRESSION WITH AN EXPRESSION INDEPENDENT OF \yC TO PREVENT THE 'DIMENSION TOO LARGE' ERROR.

        \pgfmathsetmacro{\lnxB}{\xmin*(1-(#1-#2))+\xmax*(#1-#2)} % in [xmin,xmax].
        \pgfmathsetmacro{\lnxA}{\xmin*(1-#1)+\xmax*#1} % in [xmin,xmax].
        \pgfmathsetmacro{\lnyA}{\ymin*(1-#3)+\ymax*#3} % in [ymin,ymax].
        \pgfmathsetmacro{\lnyC}{\lnyA+#4*(\lnxA-\lnxB)}
        \pgfmathsetmacro{\yCrel}{\lnyC-\ymin)/(\ymax-\ymin)} % THE IMPROVED EXPRESSION WITHOUT 'DIMENSION TOO LARGE' ERROR.

        % Define coordinates for \draw. MIND THE 'rel axis cs' as opposed to the 'axis cs'.
        \coordinate (A) at (rel axis cs:\xArel,\yArel);
        \coordinate (B) at (rel axis cs:\xBrel,\yBrel);
        \coordinate (C) at (rel axis cs:\xCrel,\yCrel);

        % Draw slope triangle.
        \draw[#6]   (A)-- node[anchor=north] {#5}
                    (B)--
                    (C)--
                    cycle;
    }
}
\newcommand{\eps}{\varepsilon}
\newcommand{\ee}{{\boldsymbol \eps}}
\newcommand{\mm}{{\boldsymbol \mu}}
\newcommand{\cc}{{\boldsymbol \chi}}
\newcommand{\sig}{{\boldsymbol\sigma}}
\newcommand{\tee}{\widetilde{\ee}}
\newcommand{\tmm}{\widetilde{\mm}}
\newcommand{\tss}{\widetilde{\sig}}
\newcommand{\zero}{\bo}
\newcommand{\norm}[1]{|\!|\!|#1|\!|\!|}
\newcommand{\enorm}[1]{\norm{#1}_{\ccurl,\omega,\Omega}}
\newcommand{\lO}{\ell_\Omega}
\newcommand{\bthe}{\boldsymbol \theta}
\newcommand{\bphi}{\boldsymbol \phi}
\newcommand{\bxi}{\boldsymbol \xi}
\newcommand{\cW}{\LC_{\rm w}}
\newcommand{\cR}{\LC_{\rm r}}
\newcommand{\cI}{\LC_{\rm i}}
\newcommand{\cb}{\LC_{\rm b}}
\newcommand{\cSE}{\gamma_{\rm st}}
\newcommand{\gbaE}{\gamma_{\rm ba}}
\newcommand{\tK}{\widetilde K}
\newcommand{\tF}{\widetilde F}
\newcommand{\eemO}{\varepsilon_{\Omega,\min}}
\newcommand{\eeMO}{\varepsilon_{\Omega,\max}}
\newcommand{\mmmO}{\mu_{\Omega,\min}}
\newcommand{\mmMO}{\mu_{\Omega,\max}}
\newcommand{\eemK}{\varepsilon_{\tK,\min}}
\newcommand{\mmMK}{\mu_{\tK,\max}}
\newcommand{\essinf}[1]{\underset{\substack{#1}}{\operatorname{ess} \operatorname{inf}}\;}
\newcommand{\esssup}[1]{\underset{\substack{#1}}{\operatorname{ess} \operatorname{sup}}\;}
\newcommand{\pol}{\mathcal{P}}
\newcommand{\osc}{\operatorname{osc}}
\newcommand{\tnorm}[1]{\norm{#1}}
\title%
[Frequency-explicit a posteriori error estimates for Maxwell's equations]%
{Frequency-explicit a posteriori error estimates for finite element discretizations of Maxwell's equations}
\author{T. Chaumont-Frelet$^{\star,\dagger}$}
\author{P. Vega$^{\star,\dagger}$}
\address{\vspace{-.5cm}}
\address{\noindent \tiny \textup{$^\star$Inria, 2004 Route des Lucioles, 06902 Valbonne, France}}
\address{\noindent \tiny \textup{$^\dagger$Laboratoire J.A. Dieudonn\'e, Parc Valrose, 28 Avenue Valrose, 06108 Nice Cedex 02, 06000 Nice, France}}
\begin{document}

\begin{abstract}
We consider residual-based {\it a posteriori} error estimators for Galerkin %-type
discretizations of time-harmonic Maxwell's equations.
We focus on configurations where the frequency is high,
or close to a resonance frequency, and derive reliability and efficiency estimates.
In contrast to previous related works, our estimates are frequency-explicit.
In particular, our key contribution is to show that even if the constants appearing
in the reliability and efficiency estimates may blow up on coarse meshes, they become
independent of the frequency for sufficiently refined meshes. Such results were previously known
for the Helmholtz equation describing scalar wave propagation problems, and we show
that they naturally extend, at the price of many technicalities in the proofs,
to Maxwell's equations. Our mathematical analysis is performed in the 3D case
and covers conforming N\'ed\'elec discretizations of the first and second family.
We also present numerical experiments in the 2D case, where Maxwell's equations are
discretized with N\'ed\'elec elements of the first family. These illustrating examples
perfectly fit our key theoretical findings and suggest that our estimates are sharp.

\vspace{.5cm}
\noindent
{\sc Key words.}
A posteriori error estimates, Finite element methods, High-frequency problems, Maxwell's equations
\end{abstract}

\maketitle

\section{Introduction}

Maxwell's equations constitute the central model of electrodynamics \cite{griffiths_1999a}.
They are ubiquitously employed to describe the propagation of electromagnetic fields, and
encompass a wide range of applications, including radar imaging \cite{dorf_2006a},
telecommunications \cite{russer_2006a}, and nanophotonics \cite{gaponenko_2010a},
just to cite a few. In realistic geometries,
analytical solutions to Maxwell's equations are out of reach, which motivates the
development of numerical schemes to compute approximate solutions. While several approaches,
such as finite differences \cite{yee_1966a} or boundary elements \cite{bendali_1984a},
are available for the problem under consideration, we focus on finite element methods
in this work \cite{monk_2003a}. The latter are especially suited in the case of heterogeneous
media with complex geometries due to the flexibility of unstructured meshes.

Current computer hardware enables the realization of three-dimensional simulations,
which are of practical interest, but the associate computational costs are still
important regarding power consumption and simulation time. As a result, not only do
numerical schemes need to be accurate and robust, but they also have to be
as efficient as possible. In the context of finite element methods, an attractive
idea to limit the computational cost is to adapt the mesh size and/or the polynomial
degree of the basis functions only locally in the areas of the domain where the solution
exhibits a complicated behavior.

These local refinements may be carried out by an iterative refinement process that
is driven by {\it a posteriori} error estimators. Besides, error estimators can also
be employed to quantitatively estimate the discretization error, which enable practitioners
to decide whether the numerical solution is sufficiently accurate for the application purposes.
As a result, {\it a posteriori} error estimators have attracted an increasing
attention over the last decades. We refer the reader to
\cite{%
ainsworth_oden_2000a,%
demkowicz_2006a,%
verfurth_1994},
and the references therein.

Early works on {\it a posteriori} error estimation have focused on scalar coercive
problems \cite{ainsworth_oden_2000a,verfurth_1994}. It turns out that there are two
key properties an {\it a posteriori} estimator should satisfy. On the one hand, ``reliability''
states that the estimator is an upper bound to the discretization error, i.e., there exists
a constant
$C_{\rm rel}$ such that
\begin{subequations}
\label{eq_intro_rel_eff}
\begin{equation}
\label{eq_intro_rel}
\tnorm{e}_\Omega \leq C_{\rm rel} \eta
\end{equation}
where $\tnorm{e}_\Omega$ is a measure of the error in the whole computational
domain. On the other hand, ``efficiency'' refers to the fact that the estimator associated with
each element $K$ of the mesh is a lower bound for the error measured in a small
region around $K$. Specifically, there exists a constant $C_{\rm eff}$ such that
\begin{equation}
\label{eq_intro_eff}
\eta_K \leq C_{\rm eff} \tnorm{e}_{\tK} + \osc_{\CT_{K,h}},
\end{equation}
\end{subequations}
where $\tnorm{e}_{\tK}$ is a measure of the error in a small area $\tK \supset K$,
and $\osc_{\CT_{K,h}}$ is a ``data oscillation'' term linked to the right-hand side
(see Section \ref{section_oscillation}).

In the context of scalar elliptic problems with constant coefficients,
$C_{\rm rel}$ and $C_{\rm eff}$ only depend on the ``shape-regularity''
parameter of the discretization mesh (see Section \ref{section_mesh} below).
These results were subsequently extended to scalar time-harmonic wave propagation problems
\cite{chaumontfrelet_ern_vohralik_2021a,dorfler_sauter_2013a,sauter_zech_2015a}. It is now
well-known that in contrast to the elliptic case, the constants $C_{\rm rel}$ and $C_{\rm eff}$
not only depend on the shape-regularity parameter, but also on the frequency. Specifically,
for a fixed mesh, the estimates in \eqref{eq_intro_rel_eff} deteriorate when the
frequency increases.

Here, we focus on time-harmonic Maxwell's equations set in a Lipschitz polyhedral domain
$\Omega \subset \mathbb R^3$. Namely, for a fixed frequency $\omega > 0$, and a
given current density $\BJ: \Omega \to \mathbb C^3$, the (unknown) electric field
$\BE: \Omega \to \mathbb C^3$ satisfies
\begin{equation}
\label{eq_maxwell_strong}
\left \{
\begin{array}{rcll}
-\omega^2 \ee \BE + \curl (\mm^{-1} \curl \BE) &=& i\omega\BJ & \text{ in } \Omega,
\\
\BE \times \bn &=& \zero & \text{ on } \partial \Omega,
\end{array}
\right .
\end{equation}
where $\partial \Omega$ is the boundary of $\Omega$, and
$\bn$ is the outward normal unit vector to $\partial \Omega$.
As we explain in detail in Section \ref{section_coefficients},
the complex tensor-valued coefficients $\ee$ and $\mm$
represent the electromagnetic properties of the materials
contained in $\Omega$. Notice than since $\ee$ and $\mm$
are allowed to take complex values, our model problem
\eqref{eq_maxwell_strong} encompasses a large number of scenarios,
including the presence of conductive materials, as well as perfectly
matched layers \cite{berenger_1994,berenger_1996,monk_2003a}, that
approximate a radiation condition in the context of scattering problems
(see Remark \ref{remark_pml}). We have chosen the boundary condition
$\BE \times \bn = \bo$ to fix the ideas, and the other natural condition, namely
$(\curl \BE) \times \bn = \bo$, can be covered without any difficulty. However, impedance
boundary conditions, $(\curl \BE) \times \bn + \lambda (\BE \times \bn) \times \bn = \bo$
($\lambda \in \mathbb C$), would require substantial dedicated work.

Several works deal with {\it a posteriori} error estimation for
Galerkin discretizations of simpler versions of \eqref{eq_maxwell_strong},
including the magnetostatic equations (where $\ee \eq \bo$)
\cite{braess_schoberl_2008a,chen_qiu_shi_2018}, and the case where either
$\ee$ is negative-definite or has a definite imaginary part
\cite{beck_hiptmair_hoppe_wohlmuth_2000a,cochezdhondt_nicaise_2007,%
nicaise_creuse_2003a,schoberl_2008}. In both scenarios, the sesquilinear form associated
with the problem is coercive, which greatly simplifies the analysis. We are actually only
aware of a handful of works where reliability and efficiency are established for indefinite
time-harmonic Maxwell's equations \cite{chen_wang_zheng_2007,monk_1998}. In
\cite{chen_wang_zheng_2007,monk_1998} however, the efficiency and reliability constants implicitly
depend on the frequency, even asymptotically. The convergence analysis
of adaptive schemes for \eqref{eq_maxwell_strong} is presented in
\cite{he_yang_wang_2020,zhong_chen_shu_wittum_xu_2012}. There again, the dependence on the
frequency is not explicitly analyzed, and as a consequence, ``sufficiently refined'' meshes
are required to start the adaptive process. As shown, for instance in
\cite{chaumontfrelet_ern_vohralik_2021a}, the reliability
constant may be surprisingly large in some cases, leading to important underestimation of
the error (we also refer the reader to the numerical experiments presented in Section
\ref{sec_numerics}). This is especially problematic when the estimator is
used as a stopping criterion.
On the other hand, the convergence speed of adaptive schemes also depends on these constants.
As a result, it is of interest to estimate the size of these constants and to identify mesh
sizes for which the estimator can be trusted.

In this work, we analyze the efficiency and reliability of residual estimators for
time-harmonic Maxwell's equations \eqref{eq_maxwell_strong}
discretized with N\'ed\'elec finite elements. In particular,
we provide a thorough inspection of the behavior of the efficiency
and reliability constants when the frequency is large and/or close to a resonance frequency.
Our key contributions are twofold. On the one hand, we show that the efficiency
constant is bounded independently of the frequency as soon as the number of
degrees of freedom (dofs) per wavelength is bounded below. On the other hand,
we establish that the reliability constant may be bounded independently of the frequency,
assuming the mesh is sufficiently refined, or the polynomial degree is sufficiently large.
Specifically, our findings are summarized by the estimates
\begin{subequations}
\label{eq_intro_rel_eff_maxwell}
\begin{equation}
\label{eq_intro_rel_maxwell}
\omega \|\BE-\BE_h\|_{\ee,\Omega}
+
\|\curl(\BE-\BE_h)\|_{\mm^{-1},\Omega}
\leq
C \left (1 + \gbaE \right ) \eta,
\end{equation}
and
\begin{equation}
\label{eq_intro_eff_maxwell}
\eta_K
\leq
C p^{3/2} \left (1 + \frac{\omega h_K}{pc_{\tK,\min}}\right )
\left (
\omega \|\BE-\BE_h\|_{\ee,\tK}
+
\|\curl(\BE-\BE_h)\|_{\mm^{-1},\tK}
\right )
+
\osc_{\CT_{K,h}},
\end{equation}
where $h_K$ is the diameter of the element $K$, $p$ is the polynomial degree,
$\BE$ and $\BE_h$ are the solution to \eqref{eq_maxwell_strong} and its discrete approximation,
$c_{\tK,\min}$ is the minimum wavespeed in the patch around $K$,
and $\gbaE$ is the so-called ``approximation factor'' that measures the approximation properties
of the finite element space, and that is properly introduced in Section
\ref{section_approximation_factor} below.
We refer the reader to Theorems \ref{rel_IPDG} and \ref{eff_IPDG}.
%For first-order discontinuous Galerkin discretizations, we derive
%in Theorems \ref{rel_LDG} and \ref{eff_LDG} estimates similar to
%\eqref{eq_intro_rel_eff_maxwell}, but including both the error on the
%electric field $\BE$, and on the magnetic field $\BH$.
\end{subequations}

In \eqref{eq_intro_rel_eff_maxwell}, the frequency appears in the reliability
and efficiency constants through the terms $\gbaE$ and $\omega h_K/(pc_{\tK,\min})$.
On the one hand, the approximation factor $\gbaE $ is now standardly used in the
{\it a posteriori} error analysis of the Helmholtz equation, and we refer the reader to
\cite{chaumontfrelet_ern_vohralik_2021a,dorfler_sauter_2013a,sauter_zech_2015a}
where the notation $\sigma$ stands for the approximation factor, as well as in the
{\it a priori} error analysis of Helmholtz problems
\cite{chaumontfrelet_nicaise_2019a,melenk_sauter_2011a}
and Maxwell's equations \cite{melenk_sauter_2020a,nicaise_tomezyk_2020},
where the symbol $\eta$ is employed. In general, it is complicated
to derive fully-explicit estimates for the approximation factor
\cite{chaumontfrelet_ern_vohralik_2021a}. However, at least
in the case of scalar wave propagation problems, qualitative upper
bounds are available for several configurations of interest
\cite{chaumontfrelet_nicaise_2018a,chaumontfrelet_nicaise_2019a,melenk_sauter_2011a}.
The analysis of the approximation factor for Maxwell's equations is
very recent, so that less results are currently available
\cite{chaumontfrelet_vega_2021a,melenk_sauter_2020a,nicaise_tomezyk_2020}.

On the other hand, since $\lambda_K \eq c_{\tK,\min}/(2\pi\omega)$
denotes the minimal wavelength in a neighborhood of $K$,
$(\omega h_K/(pc_{\tK,\min}))^{-1} \simeq \lambda_K(h_K/p)^{-1}$
is a measure of the number of dofs per wavelength, locally around $K$.
In particular, the condition $\omega h_K/(p c_{\tK,\min}) \leq C$
means that there are ``sufficiently many'' dofs per wavelength.
It is thus natural for this term to appear in \eqref{eq_intro_rel_eff_maxwell}.
In addition, since it is required to have a sufficient number of
dofs per wavelength for approximability reasons, it is expected that
$\omega h_K/(p c_{\tK,\min})$ is small in scenarios of interest.

Our main findings are almost identical to recently established results concerning
scalar wave propagation problems modeled by the Helmholtz equation
\cite{chaumontfrelet_ern_vohralik_2021a,dorfler_sauter_2013a,sauter_zech_2015a}.
As a result, they appear as a natural extension of now well-established results
for scalar wave propagation problems to Maxwell's equations.
However, the proofs are largely complicated by the functional framework
based on the $\BH(\ccurl)$ Sobolev space.
In short, the solution to Maxwell's equations is only smooth when the right-hand
side is $\BH(\ddiv)$-conforming. On the other hand, N\'ed\'elec finite element spaces
are $\BH(\ccurl)$-conforming, but \emph{not} $\BH(\ddiv)$-conforming.
This fact raises several complications when using the error as a right-hand side in
duality arguments, which is common in the analysis of high-frequency wave propagation
problems. Over the past decade, workarounds have been developed in the context of
{\it a priori} analysis
\cite{chaumontfrelet_2019a,%
chaumontfrelet_nicaise_pardo_2018a,%
ern_guermond_2018a,%
zhong_shu_wittum_xu_2009a},
but this work is, to the best of our knowledge, the first contribution to use them in the
context of {\it a posteriori} analysis.

The remainder of this work is organized as follows. Section \ref{sec_settings}
presents the model problem we consider and gathers notations as well as key preliminary
results. In Section \ref{sec_IPDG}, we derive our main results, leading to
\eqref{eq_intro_rel_eff_maxwell}. We report numerical experiments that illustrate our key
findings in Section \ref{sec_numerics}, and draw our conclusions in Section \ref{sec_conclusion}.

\section{Settings}
\label{sec_settings}

Here, we review key notations and preliminary results.

\subsection{Domain and coefficients}
\label{section_coefficients}

We consider Maxwell's equations \eqref{eq_maxwell_strong}
in a Lipschitz polyhedral domain $\Omega \subset \mathbb R^3$.
We do \emph{not} assume that $\Omega$ is simply connected, and
denote by $\lO \eq \sup_{\bx,\by \in \Omega} |\bx-\by|$ the diameter of $\Omega$.

The electromagnetic properties of the materials contained inside $\Omega$
are described by two symmetric tensor-valued functions $\ee,\mm: \Omega \to \CS(\mathbb C^3)$.
For the sake of simplicity, we assume that we can partition
$\Omega$ into a set $\LP$ of non-overlapping polyhedral subdomains $P$ such that
$\ee|_P$ and $\mm|_P$ are constant for all $P \in \LP$.
The short-hand notation $\cc \eq \mm^{-1}$ will also be useful.
Notice that the above tensor fields are symmetric, but \emph{not} self-adjoint.

If $\bphi \in \{\ee,\mm,\cc\}$ is any of the aforementioned tensor fields,
we introduce the notations
\begin{equation*}
\phi_{\min}(\bx)
\eq
\min_{\substack{\bu \in \mathbb C^{3} \\ |\bu| = 1}} \Re \bphi(\bx) \bu \cdot \overline{\bu},
\qquad
\phi_{\max}(\bx)
\eq
\max_{\substack{\bu \in \mathbb C^{3} \\ |\bu| = 1}}
\max_{\substack{\bv \in \mathbb C^{3} \\ |\bv| = 1}}
\Re \bphi(\bx) \bu \cdot \overline{\bv},
\end{equation*}
as well as $\phi_{D,\min} \eq \essinf{\bx \in D} \phi_{\min}(\bx)$
and $\phi_{D,\max} \eq \esssup{\bx \in D} \phi_{\max}(\bx)$
for any open set $D \subset \Omega$. Additionally, we assume that $\phi_{\Omega,\min} > 0$.

Similarly, we will employ the notations
$c_{D,\min} \eq \sqrt{\mu_{D,\min}/\varepsilon_{D,\max}}$
and
$c_{D,\max} \eq \sqrt{\mu_{D,\max}/\varepsilon_{D,\min}}$
for the minimum and maximum wavespeed in $D$.

\begin{remark}[Conductive materials and perfectly matched layers]
\label{remark_pml}
The coefficients $\ee$ and $\mm$ are usually meant to represent the
electric permittivity and the magnetic permeability of the materials
contained inside the computational domain $\Omega$
(see, e.g., \S4.4 and \S6.4 of \cite{griffiths_1999a}). In this case,
these tensors are real-valued. As mathematical convenience, we allow complex-valued
tensors to treat additional physical effects in a unified framework.

Since the frequency is fixed, we can for instance handle a material with (real-valued)
permittivity $\tee$, permeability $\tmm$ and conductivity $\tss$,
by setting $\ee \eq \tee - (1/i\omega) \tss$ and $\mm \eq \tmm$,
as done in \cite{chaumontfrelet_nicaise_pardo_2018a} for instance.
One readily sees that the proposed framework covers this scenario.

Another situation of importance is the case of perfectly matched layers (PML),
that are widely employed to mimic the effect of Silver-M\"uller's radiation
condition in unbounded media. Considering for the sake of simplicity the
Cartesian PML approach \cite{berenger_1994,berenger_1996,monk_2003a}, the
physical coefficients $\tee$ and $\tmm$ are modified as follows. Assuming
the region of interest $\Omega_0$ is contained in the cube $(-L,L)^3$,
the Cartesian PML approach consists in selecting a largest computational domain
$\Omega_0 \subset (-L,L)^3 \subset \Omega$, and for $\bx \in \Omega$ defining
\begin{equation*}
\ee \eq \BB^{-1}\widetilde{\ee}\BA^{-1}
\qquad\text{and}\qquad
\mm\eq\BB^{-1}\widetilde{\mm}\BA^{-1},
\end{equation*}
where $\BA$ and $\BB$ are defined as
\begin{align*}
\BA \eq \left (
\begin{array}{ccc}
\frac{1}{d_{2}d_{3}} & 0                    & 0                    \\ 
0                    & \frac{1}{d_{1}d_{3}} & 0                    \\ 
0                    & 0                    & \frac{1}{d_{1}d_{2}}
\end{array}
\right )
\qquad\text{and}\qquad
\BB \eq \left(
\begin{array}{ccc}
d_1 & 0   & 0 \\ 
0   & d_2 & 0 \\ 
0   & 0 & d_3
\end{array}
\right),
\end{align*}
with, for each space dimension $1 \leq j \leq 3$, $d_j(\bx) \eq 1 - \sigma(\bx_j)/(i\omega)$,
$\sigma(t) = 0$ for $t \in (-L,L)$ and $\sigma(t) = \sigma_\star > 0$ if $|t| > L$.
Assuming for the sake of simplicity that the physical coefficients are
transverse isotropic (i.e. the tensors are diagonal), straightforward computations reveal that
\begin{equation*}
\widetilde \varepsilon_{\Omega,\min} \geq \left(1-\frac{\sigma_{\star}^2}{\omega^2}\right) \varepsilon_{\Omega,\min}
\qquad\text{and}\qquad
\widetilde \mu_{\Omega,\min} \geq \left(1-\frac{\sigma_{\star}^2}{\omega^2}\right) \mu_{\Omega,\min},
\end{equation*}
so that this approach fits to our theoretical framework if $\sigma_{\star} < \omega$.
\end{remark}

\subsection{Functional spaces}
\label{functional_spaces}

In the following, if $D \subset \Omega$,
$L^2(D)$ denotes the space of square-integrable
complex-valued function over $D$, see e.g. \cite{adams_fournier_2003a},
and $\BL^2(D) \eq \left (L^2(D) \right )^3$. We equip $\BL^2(D)$ with the
following (equivalent) norms
\begin{equation*}
\|\bw\|_D^2 \eq \int_D |\bw|^2,
\qquad
\|\bw\|_{\bphi,D}^2 \eq \Re \int_D \bphi \bw \cdot \overline{\bw},
\qquad
\bw \in \BL^2(D)
\end{equation*}
with $\bphi \eq \ee$ or $\cc$. We denote by $(\cdot,\cdot)_D$ the inner-product of $\BL^2(D)$,
and we drop the subscript when $D = \Omega$.
If $F \subset \overline{\Omega}$ is a two-dimensional measurable planar subset,
$\|\cdot\|_F$ and $\langle \cdot,\cdot \rangle_F$
respectively denote the natural norm and inner-product of
both $L^2(F)$ and $\BL^2(F) \eq \left (L^2(F)\right )^3$.

Classically \cite{adams_fournier_2003a}, we employ the notation $H^1(D)$
for the usual Sobolev space of functions $w \in L^2(D)$ such that
$\grad w \in \BL^2(D)$. We also set $\BH^1(D) \eq \left (H^1(D)\right )^3$
and introduce the semi-norms
\begin{equation*}
\|\grad \bw\|_D^2
\eq
\sum_{j,k = 1}^3\int_D \left |\pd{\bw_j}{\bx_k}\right |^2,
\qquad
\|\grad \bw\|_{\bphi,D}^2
\eq
\sum_{j,k = 1}^3 \int_D \phi_{\max} \left |\pd{\bw_j}{\bx_k}\right |^2,
\end{equation*}
for $\bw \in \BH^1(\Omega)$ and $\bphi \eq \ee$ or $\cc$.

We shall also need Sobolev spaces of vector-valued functions with well-defined
rotation and divergence \cite{girault_raviart_1986a}. Specifically, we denote
by $\BH(\ccurl,D)$ the space of functions $\bw \in \BL^2(D)$
with $\curl \bw \in \BL^2(D)$, that we equip with the ``energy'' norm
\begin{equation*}
\norm{\bv}_{\ccurl,\omega,D}^2
\eq
\omega^2 \|\bv\|_{\ee,D}^2 + \|\curl \bv\|_{\cc,D}^2,
\quad \bv \in \BH(\ccurl,D).
\end{equation*}

In addition, if $\bxi: D \to \mathbb C^3$ is a measurable tensor-valued
function we will use the notation $\BH(\ddiv,\bxi,D)$ for the
set of functions $\bw \in \BL^2(D)$ with $\div (\bxi \bw) \in L^2(D)$,
and we will write $\BH(\ddiv^0,\bxi,D)$ for the set of fields
$\bw \in \BH(\ddiv,\bxi,D)$ such that $\div (\bxi \bw) = 0$
in $D$. When $\bxi = \BTI_3$, the identity tensor,
we simply write $\BH(\ddiv,D)$ and $\BH(\ddiv^0,D)$.

For any of the aforementioned spaces $\LV$, the notation
$\LV_0$ denotes the closure of smooth, compactly supported, functions
into $L^2(D)$ (or $\BL^2(D))$ with respect to the norm
$\LV$. These spaces also correspond to the kernel of the naturally associated
trace operators \cite{adams_fournier_2003a,girault_raviart_1986a}.

Finally, if $\LD$ is a collection of disjoint sets $D \subset \Omega$
and $\LV(D)$ is any of the aforementioned spaces, $\LV(\LD)$ stands for the
``broken'' space of functions $\bv \in \BL^2(\Omega)$ (or $L^2(\Omega)$)
such that $\bv|_D \in \LV(D)$ for all $D \in \LD$. We employ the same
notation for the inner-products, norms and semi-norms of $\LV(\LD)$ and $\LV(D)$,
with the subscript $\LD$ instead of $D$.

\subsection{Variational formulation}

In the remaining of this work, we assume that $\BJ \in \BH(\ddiv,\Omega)$. Then,
we may recast \eqref{eq_maxwell_strong} into a weak formulation, which consists in looking for
$\BE \in \BH_0(\ccurl,\Omega)$ such that
\begin{equation}
\label{eq_maxwell_weak}
b(\BE,\bv) = i\omega(\BJ,\bv) \qquad \forall \bv \in \BH_0(\ccurl,\Omega)
\end{equation}
where
$b(\be,\bv) \eq -\omega^2 (\ee\be,\bv) + (\cc\curl\be,\curl\bv)$
for all $\be,\bv \in \BH_0(\ccurl,\Omega)$.
It is easily seen that the sesquilinear form $b(\cdot,\cdot)$
satisfies the following ``G\aa rding inequality''
\begin{equation}
\label{eq_garding_inequality}
\enorm{\be}^2 = \Re b(\be,\be) + 2\omega^2\|\be\|_{\ee,\Omega}^2
\qquad
\forall \be \in \BH_0(\ccurl,\Omega).
\end{equation}

\subsection{Computational mesh}
\label{section_mesh}

We consider a mesh $\CT_h$ that partitions $\Omega$ into non-overlapping
tetrahedral elements $K$. We assume that the mesh $\CT_h$ is conforming
in the sense of \cite{ciarlet_2002a}, which means that the intersection
$\overline{K_+} \cap \overline{K_-}$ of two distinct elements
$K_\pm \in \CT_h$ is either empty, or a single vertex, edge, or face
of both $K_-$ and $K_+$. We further denote by $\CF_h^{\rm e}$ the set of exterior
faces lying on the boundary $\partial \Omega$ and by $\CF_h^{\rm i}$ the remaining
(interior) faces.

We also require that the mesh $\CT_h$ is conforming with the physical
partition $\LP$. Specifically, we assume that for each $K \in \CT_h$, there exists
$P \in \LP$ such that $K \subset P$. It equivalently
means that the coefficients are constant over each element $K \in \CT_h$, and
that the interfaces of the partition $\LP$ are covered by mesh faces
$F \in \CF_h \eq \CF_h^{\rm e} \cup \CF_h^{\rm i}$.

For $K \in \CT_h$, $\CF_K \subset \CF_h$ denotes the faces of $K$, and the notations
\begin{equation*}
h_K \eq \sup_{\bx,\by \in K} |\bx-\by|,
\qquad
\rho_K \eq \sup \left \{ r > 0 \; | \; \exists \bx \in K: \; B(\bx,r) \subset K \right \},
\end{equation*}
stand for the diameter of $K$ and the radius of the largest ball contained in $\overline{K}$.
We write $\beta_K \eq h_K/\rho_K$ for the ``shape-regularity'' parameter of $K$, and
$\beta \eq \max_{K \in \CT_h} \beta_K$ for the global shape-regularity parameter of
the mesh. The (global) mesh size is defined as $h \eq \max_{K \in \CT_h} h_K$.

We define the jump of $\bv \in \BH^1(\CT_h)$ through
$F \eq \partial K_- \cap \partial K_+ \in \CF_h^{\rm i}$ by
\begin{equation*}
\jmp{\bv}|_F \eq \bv_+ (\bn_+ \cdot \bn_F) + \bv_- (\bn_- \cdot \bn_F),
\end{equation*}
where $\bv_\pm$ is the trace of $\bv$ on $F$ from the interior of $K_{\pm}$
and $\bn_\pm$ is the unit normal pointing outward of $K_\pm$. For exterior faces
$F \in \CF_h^{\rm e}$, we simply set $\jmp{\bv}|_F \eq \bv|_F$.

If $K \in \CT_h$ and $F \in \CF_h$, then
\begin{equation*}
\CT_{K,h}
\eq
\left \{
K' \in \CT_h \; | \; \overline{K} \cap \overline{K'} \neq \emptyset
\right \},
\quad
\CT_{F,h}
\eq
\left \{
K' \in \CT_h \; | \;
F \subset \partial K'
\right \},
\end{equation*}
denote the mesh patches associated with $K$ and $F$, and we use the symbols
$\tK \eq \operatorname{Int} ( \cup_{K' \in \CT_{K,h}} \overline{K'} )$
and $\tF \eq \operatorname{Int} ( \cup_{K \in \CT_{F,h}} \overline{K} )$,
for the associated open domains.

Finally, if $\CT \subset \CT_h$ and $\CF \subset \CF_h$ are collections of elements and faces,
we write
\begin{equation*}
(\cdot,\cdot)_\CT
\eq
\sum_{K \in \CT} (\cdot,\cdot)_K,
\quad
\langle \cdot,\cdot \rangle_{\partial \CT}
\eq
\sum_{K \in \CT} \langle \cdot,\cdot \rangle_{\partial K},
\quad
\langle \cdot,\cdot \rangle_{\CF}
\eq
\sum_{F \in \CF} \langle \cdot,\cdot \rangle_{F}.
\end{equation*}

\subsection{Polynomial spaces}

In the following, for all $K \in \CT_h$ and for $q \geq 0$,
$\CP_q(K)$ stands for the space of
(complex-valued) polynomials defined over $K$ and
$\BCP_q(K) \eq \left (\CP_q(K)\right )^3$. 
We shall also require the N\'ed\'elec polynomial space
$\BCN_q(K) \eq \BCP_q(K) + \bx \times \BCP_q(K)$.
The inclusions $\grad \left (\CP_{q+1}(K)\right ) \subset \BCN_q(K) \subset \BCP_{q+1}(K)$
hold true.

If $\CT \subset \CT_h$, $\CP_q(\CT)$, $\BCP_q(\CT)$ and $\BCN_q(\CT)$ respectively
stand for the space of functions that are piecewise in $\CP_q(K)$, $\BCP_q(K)$ and $\BCN_q(K)$
for all $K \in \CT$.

In the remaining, we consider a fixed polynomial degree $p \geq 1$. Then,
\begin{equation*}
V_h \eq \CP_{p}(\CT_h) \cap H^1_0(\Omega), \qquad
\BW_h \eq \BCN_{p-1}(\CT_h) \cap \BH_0(\ccurl,\Omega)
\end{equation*}
are the usual Lagrange and N\'ed\'elec finite element spaces \cite{monk_2003a}.
Remark that we have $\grad V_h \subset \BW_h$.
We also recall that $\BCP_p(\CT_h) \cap \BH_0(\ccurl,\Omega)$
is the second family of N\'ed\'elec spaces \cite{nedelec_1986a}.

\subsection{Quasi-interpolation}
\label{section_quasi_interpolation}
There exists two operators $\CQ_h: H^1_0(\Omega) \to V_h$ and
$\CR_h: \BH^1_0(\Omega) \to \BW_h$ and a constant $\cI$ that
only depends on $\beta$ such that for each $K \in \CT_h$
\begin{equation}
\label{eq_quasi_interpolation_H1}
\frac{p}{h_K}\|w-\CQ_hw\|_K + \sqrt{\frac{p}{h_K}}\|w-\CQ_hw\|_{\partial K}
\leq
\cI
\|\grad w\|_{\tK}
\end{equation}
for all $w \in H^1_0(\Omega)$ and
\begin{equation}
\label{eq_quasi_interpolation_Hc}
\frac{p}{h_K}\|\bw-\CR_h\bw\|_K
+
\sqrt{\frac{p}{h_K}}\|(\bw-\CR_h\bw) \times \bn\|_{\partial K}
\leq
\cI
\|\grad \bw\|_{\widetilde K}
\end{equation}
for all $\bw \in \BH^1_0(\Omega)$.
A construction of $\CQ_h$ can be found in \cite{hiptmair_pechstein_2019,melenk_2005a}.
For $\CR_h$, we employ the interpolation operator introduced in
\cite{ern_guermond_2017a} when $p=1$. On the other hand, if $p \geq 2$, $\CR_h$ is
defined by employing $\CQ_h$ with degree $p-1$ componentwise, which
is possible since $\left (\CP_{p-1}(\CT_h) \cap H^1_0(\Omega)\right )^3 \subset \BW_h$.

\subsection{Bubble functions, inverse inequalities and extension operator}
\label{bubbles}
\begin{subequations}
Following \cite{dorfler_sauter_2013a,melenk_wohlmuth_2001a}, we introduce,
for each element $K \in \CT_h$ and each face $F \in \CF_h$, bubble functions
$b_K \in C^0(\overline{K})$ and $b_F \in C^0(\overline{F})$, with
$0 \leq b_K, b_F \leq \cb$, such that%
\footnote{The results in \cite{melenk_wohlmuth_2001a} are rigorously stated
for the two-dimensional case. However, as observed in \cite[Theorem 4.12]{dorfler_sauter_2013a},
these results naturally extend to the three-dimensional case.}
\begin{equation}
\label{eq_norm_bubble}
\|w\|_K \leq \cb p\|b_K^{1/2} w\|_K, \qquad \|v\|_F \leq \cb p \|b_F^{1/2} v\|_F
\end{equation}
for all $w \in \CP_p(K)$ and $v \in \CP_p(F)$ (see \cite[Theorem 2.5]{melenk_wohlmuth_2001a}).
Here, $\cb>0$ is a constant depending on shape regularity parameter $\beta$.
Besides, combining the estimate in \cite[Theorem 2.5]{melenk_wohlmuth_2001a}
with the product rule shows that
\begin{equation}
\label{eq_inv_bubble}
\|\grad(b_K w)\|_K \leq \cb \frac{p}{h_K}\|b_K^{1/2}w\|_K
\qquad
\forall w \in \CP_p(K).
\end{equation}
Finally, \cite[Lemma 2.6]{melenk_wohlmuth_2001a} guarantees the existence of
an extension operator $\LE: \pol_p(F) \to H^1_0(\tF)$ such that $\LE(v)|_F= b_F v$ and
\begin{equation}
\label{eq_ext_bubble}
ph_F^{-1/2}\|\LE(v)\|_{\CT_{F,h}}
+
p^{-1}h_{F}^{1/2}\|\grad \LE(v)\|_{\CT_{F,h}}
\leq
\cb \|v\|_F \qquad \forall v \in \CP_p(F).
\end{equation}
\end{subequations}
Identical results hold for vector-valued functions, applying the above estimates componentwise.

\subsection{Data oscillation}
\label{section_oscillation}

Classically, our {\it a posteriori} error estimates include ``data-oscillation'' terms.
These terms include a ``projected source term'' $\BJ_h \in \BCP_p(\CT_h)$,
that is defined elementwise, for each $K \in \CT_h$, as the unique polynomial
$\BJ_h \in \BCP_p(K)$ such that
\begin{equation*}
\frac{\omega^2 h_K^2}{p^2c_{\tK,\min}^2}(\BJ_h,\bv_h)_K
+
\frac{h_K^2}{p^2}(\div\BJ_h,\div\bv_h)_K
=
\frac{\omega^{2} h_K^{2}}{p^2c_{\tK,\min}^{2}}(\BJ,\bv_h)_K
+
\frac{h_K^2}{p^2}(\div\BJ,\div\bv_h)_K
\end{equation*}
for all $\bv_h \in \BCP_p(K)$.
Notice that while other ``traditional'' projections or quasi-interpolation operators
could be employed, we select this particular definition as it minimizes the terms
appearing in the final error bounds.

We are now ready to introduce the elementwise oscillation terms%
\begin{align*}
\mathrm{osc}_{0,K}
\eq
p^{3/2}\sqrt{\mmMK}\omega \frac{h_K}{p}\|\BJ-\BJ_h\|_K,
\qquad
\mathrm{osc}_{\ddiv,K}
\eq
p^{3/2}\frac{1}{\sqrt{\eemK}}\frac{h_K}{p}\|\div(\BJ-\BJ_h)\|_K
\end{align*}
for all $K \in \CT_h$, as well as their ``patchwise'' counterparts
\begin{align*}
\mathrm{osc}_{0,\CT}^2 \eq \sum_{K\in\CT}\mathrm{osc}_{0,K}^2,
\quad
\mathrm{osc}_{\ddiv,\CT}^2 \eq \sum_{K\in\CT}\mathrm{osc}_{\ddiv,K}^2,
\quad
\mathrm{osc}_{\CT}^2 \eq \mathrm{osc}_{0,\CT}^2+\mathrm{osc}_{\ddiv,\CT}^2
\end{align*}
for $\CT \subset \CT_h$. If $\BJ$ is piecewise smooth, this oscillation term decreases
faster than the discretization error to zero. Specifically, for $\CT \subset \CT_h$, if
$\BJ \in \BH^{q+1}(\CT)$ and $\div \BJ \in H^{q+1}(\CT)$ for some
$q \leq p$, then
\begin{equation*}
\osc_{\CT} \lesssim p^{3/2} \left (\frac{h}{p}\right )^{q+1},
\end{equation*}
see \cite{Ern_Gud_Sme_Voh_loc_glob_div_21}.

\subsection{Regular decomposition}
\label{sec_gradient_extraction}

Following \cite[Theorem 2.1]{hiptmair_pechstein_2019}, for all
$\bthe \in \BH_0(\ccurl,\Omega)$, there exist $\bphi \in \BH^1_0(\Omega)$
and $r \in H^1_0(\Omega)$ such that $\bthe = \bphi + \grad r$ with
\begin{equation}
\label{eq_regularity_estimate}
\|\grad \bphi\|_{\cc,\Omega}
\leq
\cR
\left (
\ell_\Omega^{-1} \|\bthe\|_{\cc,\Omega}
+
\|\curl \bthe\|_{\cc,\Omega}
\right ),
\qquad
\|\grad r\|_{\ee,\Omega} \leq \cR \|\bthe\|_{\ee,\Omega},
\end{equation}
where $\cR$ is a constant possibly depending on the geometry of $\Omega$
and, because the results in \cite{hiptmair_pechstein_2019} are
established in non-weighted norms, the material contrasts $\eeMO/\eemO$ and $\mmMO/\mmmO$.

\subsection{Weber's inequality in simply connected domains}
In the particular case where $\Omega$ is simply connected, we have
\begin{equation}
\label{eq_weber_inequality}
\|\bphi\|_{\cc,\Omega} \leq \cW \ell_\Omega \|\curl \bphi\|_{\cc,\Omega}
\qquad
\forall \bphi \in \BH_0(\ccurl,\Omega) \cap \BH(\ddiv^0,\Omega)
\end{equation}
where $\cW$ only depends on the geometry of $\Omega$ and $\mmMO/\mmmO$.

\subsection{Minimal frequency in non-simply connected domains}
If $\Omega$ is not simply connected, we will assume that $\omega \geq \omega_0 > 0$.
Then, we have
\begin{equation}
\label{eq_freq_inequality}
\ell_\Omega^{-1} \|\bphi\|_{\cc,\Omega}
\leq
\left (\frac{\omega_0\ell_\Omega}{c_{\Omega,\max}}\right )^{-1} \omega \|\bphi\|_{\ee,\Omega}
\qquad
\forall \bphi \in \BL^2(\Omega).
\end{equation}

\subsection{Well-posedness}
% \label{well_posedness}

In the remainder of this document, we will
assume that the (adjoint) problem we consider is well-posed.

\begin{assumption}[Well-posedness]
\label{assumption_well_posedness}
For all $\bj \in \BL^2(\Omega)$, there exists a unique
$\be^\star(\bj) \in \BH_0(\ccurl,\Omega)$ such that
\begin{equation}
\label{eq_def_be_bj}
b(\bw,\be^\star(\bj)) = \omega(\bw,\ee\bj) \qquad \forall \bw \in \BH_0(\ccurl,\Omega).
\end{equation}
\end{assumption}

We can then introduce the notation
\begin{equation}
\label{eq_cS}
\cSE
\eq
\sup_{\substack{\bj \in \BH(\ddiv^0,\ee,\Omega) \\ \|\bj\|_{\ee,\Omega} = 1}}
\enorm{\be^\star(\bj)}.
\end{equation}

\begin{remark}[Well-posedness]
When $\ee$ and $\cc$ are real-valued, it is well-known that \eqref{eq_def_be_bj}
is well-posed if and only if $\omega$ is not a resonance frequency. Hence, the
problem is well-posed except for a countable set of exceptional values in this case.
If $\ee$ and $\cc$ incorporate a sufficiently accurate PML, we may then expect that
\eqref{eq_def_be_bj} is well-posed for all $\omega > 0$ since the scattering problem
with a radiation condition always admits a unique solution \cite{ball_capdebosq_tseringxiao_2011a}.
Besides, \eqref{eq_def_be_bj} is well-posed for all $\omega > 0$ in geophysical
applications where $\ee$ has a definite imaginary part and $\cc$ is real
\cite{chaumontfrelet_nicaise_pardo_2018a}.
\end{remark}

\subsection{Approximation factor}
\label{section_approximation_factor}

We shall also need an ``approximation factor'', that describes
the ability of the discrete space $\BW_h$ to approximate
solutions to \eqref{eq_def_be_bj}, and is defined by
\begin{equation}
\label{eq_gba_second_order}
\gbaE \eq
\sup_{\substack{\bj \in \BH(\ddiv^0,\ee,\Omega) \\ \|\bj\|_{\ee,\Omega} = 1}}
\inf_{\be_h \in \BW_h} \enorm{\be^\star(\bj)-\be_h}.
\end{equation}
Recalling \eqref{eq_cS}, we clearly have $\gbaE \leq \cSE$, 
%and $\gbaH \leq \cSH$,
showing that the approximation factor is bounded independently of the mesh size $h$
and the approximation order $p$. It does however, in general, depend on the
wavenumber $\omega \ell_\Omega/c_{\Omega,\min}$, the geometry of $\Omega$, and the coefficients
$\ee$ and $\mm$.

The divergence-free condition of the right-hand side
defining $\gbaE$ shows that
$\be^\star(\bj) \in \BH_0(\ccurl,\Omega) \cap \BH(\ddiv^0,\overline{\ee},\Omega)$.
Then, as long as $\Omega$ and the coefficients $\ee$ and $\mm$ exhibit a regularity shift
\cite{bonito_guermond_luddens_2013a,costabel_dauge_nicaise_1999a}, we have
$\be^\star(\bj)
%,\bh^\star(\bl) 
\in \BH^s(\Omega)$ for some $0 < s \leq 1$, and standard approximation properties of N\'ed\'elec
spaces (see e.g. \cite{ern_guermond_2017a}) imply that
\begin{equation}
\label{eq_gbaE_gbaH_implicit}
\gbaE
\leq C(\omega,\Omega,\ee,\mm,\beta,s) \left (\frac{h}{p}\right )^s,
\end{equation}
where the constant is independent of $h$ and $p$.
As a result, the approximation factor may depend on the frequency (and more generally, on the geometry of the domain and
the physical coefficients) for coarse meshes, but is asymptotically bounded
independently of the frequency, the mesh, and the approximation order.

In \eqref{eq_gbaE_gbaH_implicit}, the dependence on $\omega$ is not specified.
In addition, the interest of high-order schemes is not obvious, as the convergence
rate is limited to $1$. For Helmholtz problems, several works giving a precise
description of the behavior of the approximation factor are available
\cite{chaumontfrelet_nicaise_2019a,melenk_sauter_2011a}. We also refer the
reader to \cite{chaumontfrelet_vega_2021a,melenk_sauter_2020a,nicaise_tomezyk_2020}
for recent works on Maxwell's equations.

\subsection{Notation for generic constants}
% \label{notation}

In the remaining of this document, if $A,B \geq 0$ are two positive real values,
we employ the notation $A \lesssim B$ if there exists a constant $C$ that only
depends on $\cI$, $\cb$, $\cR$, $\eps_{\Omega,\max}/\eps_{\Omega,\min}$,
$\mu_{\Omega,\max}/\mu_{\Omega,\min}$, $\cW$ and,
if $\Omega$ is not simply connected, on $\omega_0\ell_\Omega/c_{\Omega,\max}$,
such that $A \leq CB$. Importantly, $C$ is independent of $\omega$, $h$, and $p$.
However, $C$ may depend on $\Omega$ through $\cR$ and $\cW$, and it may
also depend on $\beta$ through $\cI$ and $\cb$. We also employ the notation $A \gtrsim B$
if $B \lesssim A$ and $A \sim B$ if $A \lesssim B$ and $A \gtrsim B$.

\section{A posteriori error estimates}
\label{sec_IPDG}

This section presents our theoretical developments.

\subsection{Numerical scheme}

We consider a discretization space
$\BW_h \subset \BX_h \subset \BCP_p(\Omega) \cap \BH_0(\ccurl,\Omega)$.
In the case where $\BX_h = \BW_h$, the analyzed method corresponds to
the usual first family of N\'ed\'elec finite elements,
while if $\BX_h = \BCP_p(\Omega) \cap \BH_0(\ccurl,\Omega)$,
the second family of N\'ed\'elec elements is covered. As it does
not bring any additional complexity, our analysis
also handles every situation ``in between'', where
the first family of elements is used in some part of the mesh, while the
second family is employed in the remaining areas.

The discrete method reads: find $\BE_h \in \BX_h$ such that
\begin{equation}
\label{eq_maxwell_discrete}
b(\BE_h,\bv_h) = i\omega(\BJ,\bv_h)
\end{equation}
for all $\bv_h \in \BX_h$.
The existence and uniqueness of $\BE_h$ are ensured provided that
Assumption \ref{assumption_well_posedness} holds true and that
the mesh is sufficiently refined
\cite{chaumontfrelet_2019a,%
chaumontfrelet_nicaise_pardo_2018a,%
ern_guermond_2018a,%
zhong_shu_wittum_xu_2009a}.
For general meshes, to the best of our knowledge,
the well-posedness of \eqref{eq_maxwell_discrete} is an open question.

In the remaining of this section, we will work under the assumption that
a discrete solution $\BE_h$ satisfying \eqref{eq_maxwell_discrete} has been
computed. Importantly, our analysis applies as soon as $\BE_h \in \BX_h$
satisfies \eqref{eq_maxwell_discrete} irrespectively of the mesh size
or polynomial degree. In particular, unique solvability
is not required. As a consequence of this assumption, we have
\begin{equation}
\label{eq_galerkin_orthogonality}
b(\BE-\BE_h,\bv_h) = 0
\end{equation}
for all $\bv_h \in \BX_h$.

We point out that the above assumption is not an important limitation in
practice. Indeed, for the simpler setting of the scalar Helmholtz equation, it was
recently established under similar assumptions that $h$-adaptive schemes
converge optimally starting from arbitrarily coarse meshes \cite{bespalov_haberl_praetorius_2017a}.
The idea (see \cite[Algorithm 7]{bespalov_haberl_praetorius_2017a}) is that if
\eqref{eq_maxwell_discrete} is not well-posed, a global uniform refinement is
performed instead of using the error estimator.

\subsection{Error estimators}

The design of our estimator is ``classical'' since we
largely follow \cite{beck_hiptmair_hoppe_wohlmuth_2000a,nicaise_creuse_2003a}.
However, we take special care to weigh each term of the estimator correctly
in terms of the frequency and the electromagnetic coefficients in the spirit of
\cite{cochezdhondt_nicaise_2007}.
Our estimator splits into two parts, namely, for $K \in \CT_h$,
$\eta_K^2 \eq \eta_{\ddiv,K}^2 + \eta_{\ccurl,K}^2$, with
\begin{equation}
\label{eq_definition_eta_div_IPDG}
\eta_{\ddiv,K}
\eq
\frac{1}{\sqrt{\eemK}} \left (
\frac{h_K}{p} \|\div(\BJ-i\omega\ee\BE_h)\|_K
+
\omega \sqrt{\frac{h_K}{p}}\|\jmp{\ee\BE_h} \cdot \bn\|_{\partial K \setminus \partial \Omega}
\right )
\end{equation}
and
\begin{multline}
\label{eq_definition_eta_curl_IPDG}
\eta_{\ccurl,K}
\eq
\sqrt{\mmMK}
\Bigg (
\frac{h_K}{p} \|i\omega\BJ + \omega^2 \ee \BE_h-\curl(\cc\curl \BE_h)\|_K\\
+
\sqrt{\frac{h_K}{p}}\|\jmp{\cc \curl \BE_h} \times \bn\|_{\partial K \setminus \partial \Omega}\Bigg ).
\end{multline}
We also set
\begin{equation*}
% \label{eq_definition_eta_global_IPDG}
\eta^2 \eq \sum_{K \in \CT_h} \eta_K^2,
\qquad
\eta_{\ddiv}^2 \eq \sum_{K \in \CT_h} \eta_{\ddiv,K}^2,
\qquad
\eta_{\ccurl}^2 \eq \sum_{K \in \CT_h} \eta_{\ccurl,K}^2.
\end{equation*}

\subsection{Reliability}

This section is devoted to reliability estimates. The first step consists in controlling
``residual terms'', that is, the sesquilinear form applied to the error and an arbitrary
test function. This is carried out in Lemma \ref{lemma_residual_IPDG_helmholtz} and
\ref{lemma_residual_IPDG} below.

\begin{lemma}[Control of the residual]
\label{lemma_residual_IPDG_helmholtz}
The estimates
\begin{equation}
\label{eq_upper_bound_b_grad_IPDG}
|b(\BE-\BE_h,\grad q)| \lesssim \omega \eta_{\ddiv} \|\grad q\|_{\ee,\Omega}
\end{equation}
and
\begin{equation}
\label{eq_upper_bound_b_H1_IPDG}
|b(\BE-\BE_h,\bphi)| \lesssim \eta_{\ccurl} \|\grad \bphi\|_{\cc,\Omega}
\end{equation}
hold true for all $q \in H^1_0(\Omega)$ and $\bphi \in \BH^1_0(\Omega)$.
\end{lemma}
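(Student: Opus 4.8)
The plan is to prove both estimates by inserting the defining PDE into the residual $b(\BE-\BE_h,\cdot)$, integrating by parts element-by-element, and then controlling each resulting volume and face term by the corresponding estimator contributions $\eta_{\ddiv}$ and $\eta_{\ccurl}$. The crucial point is that the two inequalities test against fundamentally different objects: \eqref{eq_upper_bound_b_grad_IPDG} tests against a gradient $\grad q$, where the curl contribution vanishes and only the $\ddiv$-part of the residual survives, while \eqref{eq_upper_bound_b_H1_IPDG} tests against a genuinely $\BH^1_0$-conforming field $\bphi$, picking up the $\ccurl$-part. I would handle them separately.

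For \eqref{eq_upper_bound_b_grad_IPDG}, the key observation is that $\curl(\grad q) = \zero$, so $b(\BE-\BE_h,\grad q) = -\omega^2(\ee(\BE-\BE_h),\grad q)$. Using Galerkin orthogonality \eqref{eq_galerkin_orthogonality} with $\grad \CQ_h q \in \grad V_h \subset \BW_h \subset \BX_h$, I would subtract the interpolant and replace $q$ by $q - \CQ_h q$. Then I would write the weak form using the original equation $b(\BE,\grad q) = i\omega(\BJ,\grad q)$, integrate the volume term $\omega^2(\ee\BE_h,\grad q)$ by parts on each element to expose $\div(\ee\BE_h)$ in the interior and the normal-jump traces $\jmp{\ee\BE_h}\cdot\bn$ on the faces, and combine with $i\omega(\BJ,\grad(q-\CQ_h q))$ so that the volume residual becomes $\div(\BJ - i\omega\ee\BE_h)$ (recall $\div\BJ$ is controlled since $\BJ \in \BH(\ddiv,\Omega)$). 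Applying Cauchy--Schwarz elementwise together with the quasi-interpolation bounds \eqref{eq_quasi_interpolation_H1} — which supply exactly the factors $h_K/p$ on volume terms and $\sqrt{h_K/p}$ on face terms, against $\|\grad q\|_{\tK}$ — and summing over $K$ with finite overlap of the patches $\tK$ then yields the $\eta_{\ddiv}$ bound, after accounting for the weight $1/\sqrt{\eemK}$ built into $\eta_{\ddiv,K}$ and the frequency factor $\omega$.

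For \eqref{eq_upper_bound_b_H1_IPDG}, the structure is analogous but now the curl term is active. I would again use Galerkin orthogonality, this time with $\CR_h\bphi \in \BW_h$, to replace $\bphi$ by $\bphi - \CR_h\bphi$. Integrating the term $(\cc\curl(\BE-\BE_h),\curl\bphi)$ by parts elementwise produces the elementwise curl residual $\curl(\cc\curl\BE_h)$ in the volume and the tangential jumps $\jmp{\cc\curl\BE_h}\times\bn$ on the faces; combining with the volume contribution $-\omega^2(\ee\BE_h,\bphi) + i\omega(\BJ,\bphi)$ assembles exactly the strong residual $i\omega\BJ + \omega^2\ee\BE_h - \curl(\cc\curl\BE_h)$ appearing in \eqref{eq_definition_eta_curl_IPDG}. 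Here I would invoke the vector quasi-interpolation estimate \eqref{eq_quasi_interpolation_Hc}, whose tangential-trace control $\|(\bphi-\CR_h\bphi)\times\bn\|_{\partial K}$ is precisely what pairs with the tangential jumps, and bound everything by $\|\grad\bphi\|_{\cc,\tK}$, summing over patches and collecting the weight $\sqrt{\mmMK}$ into $\eta_{\ccurl,K}$.

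I expect \textbf{the main obstacle} to be the careful bookkeeping of the frequency- and material-dependent weights so that the raw residual terms reassemble into exactly the weighted quantities $\eta_{\ddiv,K}$ and $\eta_{\ccurl,K}$ as defined in \eqref{eq_definition_eta_div_IPDG}--\eqref{eq_definition_eta_curl_IPDG}, rather than into some unweighted analogue that would spoil the frequency-explicit character of the estimate; in particular one must verify that the local wavespeed weights $\eemK$, $\mmMK$ and the powers of $p$ absorbed by quasi-interpolation are consistent with the normalizing norms $\|\grad q\|_{\ee,\Omega}$ and $\|\grad\bphi\|_{\cc,\Omega}$ on the right-hand sides. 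The integration-by-parts steps and the patch-overlap summation are routine given the tools already assembled in Section~\ref{sec_settings}, but matching weights termwise is where the technicalities of the $\BH(\ccurl)$ framework genuinely enter.
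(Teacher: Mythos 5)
Your proposal is correct and follows essentially the same route as the paper's proof: for \eqref{eq_upper_bound_b_grad_IPDG} you use $\curl\grad q=\zero$ and the weak form to reduce the residual to $i\omega(\BJ-i\omega\ee\BE_h,\grad w)$, integrate by parts elementwise to expose $\div(\BJ-i\omega\ee\BE_h)$ and the normal jumps, and conclude via Galerkin orthogonality with $\grad(\CQ_h q)\in\BW_h$ and \eqref{eq_quasi_interpolation_H1}; for \eqref{eq_upper_bound_b_H1_IPDG} you assemble the strong residual and tangential jumps by elementwise integration by parts and conclude with $\CR_h\bphi$ and \eqref{eq_quasi_interpolation_Hc}, exactly as in the paper. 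The weight bookkeeping you flag as the main obstacle is indeed the only delicate point, and it is handled in the paper precisely as you describe.
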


\begin{proof}
We first establish \eqref{eq_upper_bound_b_grad_IPDG}.
We observe that for any $w \in H^1_0(\Omega)$, we have
\begin{equation*}
b(\BE-\BE_h,\grad w)
=
(i\omega\BJ+\omega^2\ee\BE_h,\grad w)
=
i\omega(\BJ-i\omega\ee\BE_h,\grad w)
\end{equation*}
so that
\begin{align*}
\frac{1}{i\omega} b(\BE-\BE_h,\grad w)
&=
-i\omega\langle \ee \BE_h\cdot \bn,w \rangle_{\partial \CT_h}
-(\div(\BJ-i\omega\ee \BE_h),w)_{\CT_h}
\\
&=
-i\omega \langle \jmp{\ee\BE_h} \cdot \bn,w \rangle_{\CF_h^{\rm i}}
-(\div(\BJ-i\omega\ee\BE_h),w)_{\CT_h},
\end{align*}
and therefore
\begin{align}
\label{tmp_upper_bound_b_grad_IPDG}
\frac{1}{\omega}|b(\BE-\BE_h,\grad w)|
&\leq
\omega\sum_{F \in \CF_h^{\rm i}} \|\jmp{\ee\BE_h}\cdot \bn\|_F\|w\|_F
+
\sum_{K \in \CT_h}\|\div(\BJ-i\omega\ee\BE_h)\|_K\|w\|_K
\\
\nonumber
%&\lesssim
%\sum_{K \in \CT_h}
%\left (
%\|\div(\BJ-i\omega\ee\BE_h)\|_K\|w\|_K
%+
%\omega \|\jmp{\ee\BE_h} \cdot \bn\|_{\partial K \setminus \partial \Omega}
%\|w\|_{\partial K \setminus \partial \Omega}
%\right )
%\\
%\nonumber
&\lesssim
\sum_{K \in \CT_h}
\sqrt{\varepsilon_{\widetilde K,\min}}
\eta_{\ddiv,K}
\left (
\frac{p}{h_K} \|w\|_K
+
\sqrt{\frac{p}{h_K}} \|w\|_{\partial K \setminus \partial \Omega}
\right ).
\end{align}
Let now $q \in H^1_0(\Omega)$. Since $\grad (\CQ_h q) \in \BW_h$, by Galerkin orthogonality
\eqref{eq_galerkin_orthogonality}, we can apply \eqref{tmp_upper_bound_b_grad_IPDG} with
$w = q - \CQ_h q$ to show that
\begin{align*}
\frac{1}{\omega}|b(\BE-\BE_h,\grad q)|
&=
\frac{1}{\omega}|b(\BE-\BE_h,\grad (q-\CQ_h q))|
\\
&\lesssim
\sum_{K \in \CT_h}
\sqrt{\varepsilon_{\widetilde K,\min}}
\eta_{\ddiv,K}
\left (
\frac{p}{h_K} \|q-\CQ_h q\|_K
+
\sqrt{\frac{p}{h_K}} \|q-\CQ_h q\|_{\partial K \setminus \partial \Omega}
\right )
\\
&\lesssim
\sum_{K \in \CT_h} \sqrt{\varepsilon_{\widetilde K,\min}} \eta_{\ddiv,K} \|\grad q\|_{\tK}
\lesssim
\eta_{\ddiv} \|\grad q\|_{\ee,\Omega},
\end{align*}
where we additionally employed \eqref{eq_quasi_interpolation_H1}. This shows
\eqref{eq_upper_bound_b_grad_IPDG}.

We now focus on \eqref{eq_upper_bound_b_H1_IPDG}. We start with an arbitrary element
$\bw \in \BH^1(\CT_h) \cap \BH_0(\ccurl,\Omega)$. We have
\begin{align*}
b(\BE-\BE_h,\bw)
&=
i\omega (\BJ,\bw)+\omega^2(\ee\BE_h,\bw)-(\cc \curl \BE_h,\curl \bw)
\\
&=
(i\omega \BJ + \omega^2 \BE_h -\curl (\cc\curl \BE_h),\bw)_{\CT_h}
+
\langle \cc \curl \BE_h \times \bn,\bw \rangle_{\partial \CT_h}
\\
&=
(i\omega \BJ + \omega^2 \BE_h -\curl (\cc\curl \BE_h),\bw)_{\CT_h}
+
\langle \jmp{\cc \curl \BE_h} \times \bn,\bw \rangle_{\CF_h^{\rm i}}
\end{align*}
and
\begin{align}
\label{tmp_upper_bound_b_H1_IPDG}
|b(\BE-\BE_h,\bw)|
&\leq
\sum_{K \in \CT_h} \|i\omega \BJ + \omega^2 \BE_h -\curl (\cc\curl \BE_h)\|_K\|\bw\|_K
\\
\nonumber
&\quad+
\sum_{F \in \CF_h^{\rm i}}
\|\jmp{\cc \curl \BE_h} \times \bn\|_F\|\bw\times \bn\|_F
\\
\nonumber
&\lesssim
\sum_{K \in \CT_h}
\frac{1}{\sqrt{\mmMK}}
\eta_{\ccurl,K}
\left (\frac{p}{h_K} \|\bw\|_K
+
\sqrt{\frac{p}{h_K}} \|\bw \times \bn\|_{\partial K\setminus\partial\Omega}\right ).
\end{align}
Then, if $\bphi \in \BH^1_0(\Omega)$, since $\CR_h \bphi \in \BW_h$,
we can employ Galerkin orthogonality
\eqref{eq_galerkin_orthogonality},
\eqref{tmp_upper_bound_b_H1_IPDG} with $\bw = \bphi - \CR_h \bphi$
and \eqref{eq_quasi_interpolation_Hc}, showing that
\begin{align*}
|b(\BE-\BE_h,\bphi)| &= |b(\BE-\BE_h,\bphi-\CR_h\bphi)|
\\
&\lesssim\sum_{K \in \CT_h} \frac{1}{\sqrt{\mmMK}} \eta_{\ccurl,K}
\left (
\frac{p}{h_K} \|\bphi-\CR_h\bphi\|_K
+
\sqrt{\frac{p}{h_K}} \|(\bphi-\CR_h\bphi) \times \bn\|_{\partial K\setminus\partial\Omega}
\right )
\\
&\lesssim
\sum_{K \in \CT_h} \frac{1}{\sqrt{\mmMK}} \eta_{\ccurl,K} \|\grad \bphi\|_{\tK}
\lesssim
\sum_{K \in \CT_h} \eta_{\ccurl,K} \|\grad \bphi\|_{\cc,\tK}
\lesssim
\eta_{\ccurl} \|\grad \bphi\|_{\cc,\Omega}.
\end{align*}
\end{proof}

\begin{lemma}[General control of the residual]
\label{lemma_residual_IPDG}
We have
\begin{equation}
\label{eq_upper_bound_b_div_free_IPDG}
|b(\BE-\BE_h,\bthe)| \lesssim \eta
\enorm{\bthe}
\end{equation}
for all $\bthe \in \BH_0(\ccurl,\Omega)$.
\end{lemma}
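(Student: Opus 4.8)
The plan is to reduce the claim to the two residual bounds already established in Lemma~\ref{lemma_residual_IPDG_helmholtz} by splitting an arbitrary $\bthe \in \BH_0(\ccurl,\Omega)$ into a gradient part and an $\BH^1_0$-regular part. Concretely, I would invoke the regular decomposition of Section~\ref{sec_gradient_extraction}: write $\bthe = \bphi + \grad r$ with $\bphi \in \BH^1_0(\Omega)$, $r \in H^1_0(\Omega)$ obeying \eqref{eq_regularity_estimate}. By linearity, $b(\BE-\BE_h,\bthe) = b(\BE-\BE_h,\bphi) + b(\BE-\BE_h,\grad r)$, and I would bound the two summands with \eqref{eq_upper_bound_b_H1_IPDG} and \eqref{eq_upper_bound_b_grad_IPDG} respectively, obtaining $|b(\BE-\BE_h,\bthe)| \lesssim \eta_{\ccurl}\|\grad\bphi\|_{\cc,\Omega} + \omega\eta_{\ddiv}\|\grad r\|_{\ee,\Omega}$. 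The gradient contribution is immediately favourable: by \eqref{eq_regularity_estimate} we have $\|\grad r\|_{\ee,\Omega} \lesssim \|\bthe\|_{\ee,\Omega}$, and since $\omega\|\bthe\|_{\ee,\Omega} \le \enorm{\bthe}$ by the very definition of the energy norm, this term is $\lesssim \eta_{\ddiv}\enorm{\bthe}$.

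The crux is the $\BH^1_0$-part, for which \eqref{eq_regularity_estimate} only yields $\|\grad\bphi\|_{\cc,\Omega} \lesssim \ell_\Omega^{-1}\|\bthe\|_{\cc,\Omega} + \|\curl\bthe\|_{\cc,\Omega}$. The second summand is harmless, being $\le\enorm{\bthe}$, but the first, $\ell_\Omega^{-1}\|\bthe\|_{\cc,\Omega}$, cannot be bounded by $\enorm{\bthe}$ in general (a pure gradient field has vanishing curl yet arbitrary $\cc$-norm at low frequency). Here I would split into the two regimes anticipated in the preliminaries. If $\Omega$ is not simply connected, the standing assumption $\omega \ge \omega_0$ is in force, and \eqref{eq_freq_inequality} gives directly $\ell_\Omega^{-1}\|\bthe\|_{\cc,\Omega} \lesssim \omega\|\bthe\|_{\ee,\Omega} \le \enorm{\bthe}$, so the decomposition may be applied to $\bthe$ itself and the proof closes.

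If $\Omega$ is simply connected, no frequency lower bound is available, so I would first peel off a gradient via the $\BL^2$-orthogonal Helmholtz decomposition $\bthe = \bthe_0 + \grad\psi$, with $\bthe_0 \in \BH_0(\ccurl,\Omega)\cap\BH(\ddiv^0,\Omega)$ and $\psi \in H^1_0(\Omega)$; the term $\grad\psi$ is again controlled by \eqref{eq_upper_bound_b_grad_IPDG} together with orthogonality ($\|\grad\psi\|_{\ee,\Omega},\,\|\bthe_0\|_{\ee,\Omega}\lesssim\|\bthe\|_{\ee,\Omega}$), while $\curl\bthe_0 = \curl\bthe$. I would then apply the regular decomposition to the divergence-free field $\bthe_0$, and now Weber's inequality \eqref{eq_weber_inequality} is available: $\ell_\Omega^{-1}\|\bthe_0\|_{\cc,\Omega} \le \cW\|\curl\bthe_0\|_{\cc,\Omega} = \cW\|\curl\bthe\|_{\cc,\Omega} \le \cW\enorm{\bthe}$. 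This turns \eqref{eq_regularity_estimate} into $\|\grad\bphi\|_{\cc,\Omega}\lesssim\enorm{\bthe}$, and the argument of the first paragraph carries over with $\bthe$ replaced by $\bthe_0$.

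Collecting the estimates in either case yields $|b(\BE-\BE_h,\bthe)| \lesssim (\eta_{\ddiv}+\eta_{\ccurl})\enorm{\bthe} \lesssim \eta\enorm{\bthe}$, using $\eta_{\ddiv}^2+\eta_{\ccurl}^2 = \eta^2$. The main obstacle is precisely the treatment of $\ell_\Omega^{-1}\|\bthe\|_{\cc,\Omega}$: recognizing that it is \emph{not} controlled by the energy norm for general fields, and that the remedy—Weber's inequality—demands a genuinely divergence-free argument, which forces the auxiliary $\BL^2$-orthogonal decomposition in the simply connected case and the use of \eqref{eq_freq_inequality} otherwise. Everything else is a direct application of Lemma~\ref{lemma_residual_IPDG_helmholtz} together with the elementary bounds $\omega\|\bthe\|_{\ee,\Omega}\le\enorm{\bthe}$ and $\|\curl\bthe\|_{\cc,\Omega}\le\enorm{\bthe}$.
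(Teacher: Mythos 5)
Your proposal is correct and follows essentially the same route as the paper's proof: peel off a gradient via the $\BL^2$-orthogonal Helmholtz decomposition, apply the regular decomposition of Section~\ref{sec_gradient_extraction}, invoke the two bounds of Lemma~\ref{lemma_residual_IPDG_helmholtz}, and absorb the problematic term $\ell_\Omega^{-1}\|\cdot\|_{\cc,\Omega}$ with Weber's inequality \eqref{eq_weber_inequality} in the simply connected case and with \eqref{eq_freq_inequality} otherwise. The only (cosmetic) difference is that you skip the orthogonal decomposition in the non-simply-connected case, where \eqref{eq_freq_inequality} indeed renders it unnecessary, whereas the paper performs it in both cases.
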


\begin{proof}
Let $\bthe \in\BH_0(\ccurl,\Omega)$. We start by introducing
$q$ as the unique element of $H_0^1(\Omega)$ such that
\begin{equation*}
(\grad q,\grad v)_\Omega = (\bthe,\grad v)_\Omega
\qquad
\forall v\in H_0^1(\Omega),
\end{equation*}
so that $\widetilde \bthe \eq \bthe-\grad q \in \BH_0(\ccurl,\Omega) \cap \BH(\ddiv^0,\Omega)$,
and
\begin{equation*}
\omega \|\grad q\|_{\ee,\Omega} \lesssim \omega \|\bthe\|_{\ee,\Omega},
\qquad
\enorm{\widetilde \bthe} \lesssim \enorm{\bthe}.
\end{equation*}
We then invoke Section \ref{sec_gradient_extraction}, which yields the
existence of $r \in H^1_0(\Omega)$ and $\bphi \in \BH^1_0(\Omega)$
such that $\widetilde \bthe = \grad r + \bphi$, with
\begin{equation*}
\|\grad r\|_{\ee,\Omega}
\lesssim
\|\widetilde \bthe\|_{\ee,\Omega}
\lesssim
\|\bthe\|_{\ee,\Omega},
\qquad
\|\grad \bphi\|_{\cc,\Omega}
\lesssim
\ell_\Omega^{-1} \|\widetilde \bthe\|_{\cc,\Omega} + \|\curl \widetilde \bthe\|_{\cc,\Omega}.
\end{equation*}
Employing
\eqref{eq_upper_bound_b_grad_IPDG} and \eqref{eq_upper_bound_b_H1_IPDG}, we have
\begin{align*}
|b(\BE-\BE_h,\bthe)|
&\lesssim
\omega \eta_{\ddiv} \|\grad q\|_{\ee,\Omega}
+
\omega \eta_{\ddiv} \|\grad r\|_{\ee,\Omega}
+
\eta_{\ccurl} \|\grad \bphi\|_{\cc,\Omega}
\\
&\lesssim
\eta
\left (
\omega \|\bthe\|_{\ee,\Omega}
+
\ell_\Omega^{-1} \|\widetilde \bthe\|_{\cc,\Omega}
+
\|\curl \widetilde \bthe\|_{\cc,\Omega}
\right ).
\end{align*}
If $\Omega$ is simply-connected, we employ Weber's inequality
\eqref{eq_weber_inequality}, and we write that
\begin{equation*}
\ell_\Omega^{-1} \|\widetilde \bthe\|_{\cc,\Omega} + \|\curl \widetilde \bthe\|_{\cc,\Omega}
\lesssim
(1+\cW) \|\curl \widetilde \bthe\|_{\cc,\Omega}
\lesssim
\enorm{\widetilde \bthe}
\lesssim
\enorm{\bthe}.
\end{equation*}
When $\Omega$ is not simply-connected, we utilize that
\begin{equation*}
\ell_\Omega^{-1} \|\widetilde \bthe\|_{\cc,\Omega} + \|\curl \widetilde \bthe\|_{\cc,\Omega}
\lesssim
\left (\frac{\omega_0\ell_\Omega}{c_{\Omega,\max}}\right )^{-1}
\omega \|\widetilde \bthe\|_{\ee,\Omega} + \|\curl \widetilde \bthe\|_{\cc,\Omega}
\lesssim
\enorm{\widetilde \bthe}
\lesssim
\enorm{\bthe}.
\end{equation*}
In both cases, this concludes the proof.
\end{proof}

The next step of the proof is an ``Aubin-Nitsche'' type result that we employ to estimate
the error measured in the $\BL^2(\Omega)$-norm. This step is required to make up for the
lack of coercivity of the sesquilinear form $b$.

\begin{lemma}[Aubin-Nitsche]
\label{AubinNitsche_IPDG}
We have
\begin{equation}
\label{eq_L2_estimate_second_order}
\omega\|\BE-\BE_h\|_{\ee,\Omega}
\lesssim
(1+\gbaE) \eta.
\end{equation}
\end{lemma}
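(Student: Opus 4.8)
The plan is to bound $\omega\|\be\|_{\ee,\Omega}$, where $\be \eq \BE-\BE_h$, by splitting the error into an irrotational part and an $\ee$-divergence-free part, controlling the former with the residual estimate \eqref{eq_upper_bound_b_grad_IPDG} and the latter by a duality (Aubin--Nitsche) argument that brings in the approximation factor $\gbaE$. Concretely, I first introduce $s \in H^1_0(\Omega)$ as the unique solution of $(\ee\grad s,\grad v) = (\ee\be,\grad v)$ for all $v \in H^1_0(\Omega)$, which is well-posed by Lax--Milgram since $\Re(\ee\grad v,\grad v) = \|\grad v\|_{\ee,\Omega}^2$ is coercive, and set $\bz \eq \be-\grad s$. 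By construction $(\ee\bz,\grad v) = 0$ for all $v \in H^1_0(\Omega)$, i.e. $\bz \in \BH(\ddiv^0,\ee,\Omega)$. A triangle inequality then reduces the claim to bounding $\omega\|\grad s\|_{\ee,\Omega}$ and $\omega\|\bz\|_{\ee,\Omega}$ separately.

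For the gradient part, I use $\curl\grad s = \zero$ to obtain $b(\be,\grad s) = -\omega^2(\ee\be,\grad s)$, while testing the definition of $s$ with $v=s$ gives $(\ee\be,\grad s) = (\ee\grad s,\grad s)$, whose real part is $\|\grad s\|_{\ee,\Omega}^2$. Hence $\omega^2\|\grad s\|_{\ee,\Omega}^2 = |\Re b(\be,\grad s)| \le |b(\be,\grad s)|$, and \eqref{eq_upper_bound_b_grad_IPDG} yields $\omega^2\|\grad s\|_{\ee,\Omega}^2 \lesssim \omega\eta_{\ddiv}\|\grad s\|_{\ee,\Omega}$, that is $\omega\|\grad s\|_{\ee,\Omega} \lesssim \eta_{\ddiv} \le \eta$.

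For the divergence-free part, I invoke Assumption \ref{assumption_well_posedness} to define the dual field $\bxi \eq \be^\star(\bz) \in \BH_0(\ccurl,\Omega)$, which satisfies $b(\bw,\bxi) = \omega(\bw,\ee\bz)$ for all $\bw$. The crucial observation is that, because $\ee$ is symmetric and $\bz$ is $\ee$-divergence-free, $(\grad s,\ee\bz) = \overline{(\ee\bz,\grad s)} = 0$, so that testing with $\bw=\be$ gives $b(\be,\bxi) = \omega(\be,\ee\bz) = \omega(\bz,\ee\bz)$ and therefore $\omega\|\bz\|_{\ee,\Omega}^2 = \Re b(\be,\bxi)$. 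This is what lets me use the \emph{full} error in the duality. Since $\grad V_h \subset \BW_h \subset \BX_h$, Galerkin orthogonality \eqref{eq_galerkin_orthogonality} gives $b(\be,\bxi) = b(\be,\bxi-\be_h)$ for every $\be_h \in \BW_h$, and Lemma \ref{lemma_residual_IPDG} bounds this by $\lesssim \eta\enorm{\bxi-\be_h}$. Taking the infimum over $\be_h \in \BW_h$ and recalling that $\bxi = \|\bz\|_{\ee,\Omega}\,\be^\star(\bz/\|\bz\|_{\ee,\Omega})$, with $\bz/\|\bz\|_{\ee,\Omega} \in \BH(\ddiv^0,\ee,\Omega)$ of unit $\ee$-norm, the definition \eqref{eq_gba_second_order} of $\gbaE$ gives $\inf_{\be_h}\enorm{\bxi-\be_h} \le \gbaE\|\bz\|_{\ee,\Omega}$. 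Combining, $\omega\|\bz\|_{\ee,\Omega}^2 \lesssim \eta\,\gbaE\,\|\bz\|_{\ee,\Omega}$, whence $\omega\|\bz\|_{\ee,\Omega} \lesssim \gbaE\eta$, and the triangle inequality then gives \eqref{eq_L2_estimate_second_order}.

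The main obstacle, and the reason the Maxwell case is more delicate than the standard coercive Aubin--Nitsche scheme, is the non-Hermitian, complex-symmetric nature of $\ee$: the Helmholtz decomposition is orthogonal only in the bilinear pairing, not in the sesquilinear inner product, so one must carefully track conjugates to verify $(\grad s,\ee\bz)=0$. This identity is exactly what permits replacing $b(\bz,\bxi)$ by $b(\be,\bxi)$ and thereby applying Galerkin orthogonality; without it, a stray $b(\grad s,\bxi)$ term would survive and could only be bounded using the larger stability constant $\cSE$ rather than the approximation factor $\gbaE$, degrading the estimate. A secondary technical point is confirming $\bz \in \BH(\ddiv^0,\ee,\Omega)$ so that the definition of $\gbaE$ genuinely applies to the dual datum.
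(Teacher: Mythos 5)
Your proof is correct and follows essentially the same route as the paper: the same $\ee$-weighted Helmholtz decomposition $\BE-\BE_h=\grad s+\bz$, the same use of \eqref{eq_upper_bound_b_grad_IPDG} for the gradient part, and the same duality argument with $\be^\star(\bz)$, Galerkin orthogonality, Lemma \ref{lemma_residual_IPDG}, and the definition \eqref{eq_gba_second_order} of $\gbaE$ for the $\ee$-divergence-free part. The conjugation identity $(\grad s,\ee\bz)=0$ that you single out is indeed the step the paper uses implicitly when replacing $(\bthe,\ee\bthe)$ by $(\BE-\BE_h,\ee\bthe)$ (it follows from $(\bu,\bv)=\overline{(\bv,\bu)}$ alone, without needing the symmetry of $\ee$).
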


\begin{proof}
We first introduce the Helmholtz decomposition of the error.
Namely, we define $s$ as the unique element of $H^1_0(\Omega)$ such that
\begin{equation*}
(\ee \grad s,\grad v) = (\ee(\BE-\BE_h),\grad v) \qquad \forall v \in H^1_0(\Omega),
\end{equation*}
so that $\BE-\BE_h = \grad s + \bthe$, with $\bthe \in \BH(\ddiv^0,\ee,\Omega)$.
For the gradient part, thanks to \eqref{eq_upper_bound_b_grad_IPDG}, we have
\begin{equation*}
\omega^2\|\grad s\|_{\ee,\Omega}^2
=
\omega^2\Re(\ee(\BE-\BE_h),\grad s)
=
-\Re b(\BE-\BE_h,\grad s)
\lesssim
\omega\eta_{\ddiv}\|\grad s\|_{\ee,\Omega},
\end{equation*}
so that
$\omega \|\grad s\|_{\ee,\Omega} \lesssim \eta_{\ddiv}$.

For the divergence-free part, let $\bxi$ be the unique element of
$\BH_0(\ccurl,\Omega)$ such that $b(\bw,\bxi) = \omega(\bw,\ee \bthe)$
for all $\bw \in \BH_0(\ccurl,\Omega)$. Using \eqref{eq_upper_bound_b_div_free_IPDG},
we have
\begin{align*}
\omega \Re(\bthe,\ee \bthe)
=
\omega\Re(\BE-\BE_h,\ee\bthe)
=
\Re b(\BE-\BE_h,\bxi)´
=
\Re b(\BE-\BE_h,\bxi-\bxi_h)
\lesssim
\eta\enorm{\bxi-\bxi_h}
\end{align*}
for all $\bxi_h \in \BW_h$. As $\bthe \in \BH(\ddiv^0,\ee,\Omega)$,
recalling definition \eqref{eq_gba_second_order} of the approximation factor,
it holds that
\begin{equation*}
\omega \|\bthe\|_{\ee,\Omega}^2
=
\omega \Re (\bthe,\ee\bthe)
\lesssim
\gbaE\eta\|\bthe\|_{\ee,\Omega},
\end{equation*}
and \eqref{eq_L2_estimate_second_order} follows.
\end{proof}

We close this section with the reliability estimate, that uses G\aa rding inequality
\eqref{eq_garding_inequality}.

\begin{theorem}[Reliability]
\label{rel_IPDG}
The following estimate holds true
\begin{equation*}
\enorm{\BE-\BE_h} \lesssim (1 + \gbaE) \eta.
\end{equation*}
\end{theorem}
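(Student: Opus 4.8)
The plan is to combine the Gårding inequality \eqref{eq_garding_inequality} with the residual bound of Lemma \ref{lemma_residual_IPDG} and the $\BL^2$-control from the Aubin--Nitsche Lemma \ref{AubinNitsche_IPDG}. The point of Gårding is that it converts the (non-coercive) sesquilinear form into the full energy norm at the cost of an extra $\BL^2$-term, and that extra term is precisely what Lemma \ref{AubinNitsche_IPDG} is designed to absorb.

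Concretely, first I would apply \eqref{eq_garding_inequality} with $\be = \BE - \BE_h$, writing
\begin{equation*}
\enorm{\BE-\BE_h}^2
=
\Re b(\BE-\BE_h,\BE-\BE_h)
+
2\omega^2 \|\BE-\BE_h\|_{\ee,\Omega}^2.
\end{equation*}
For the first term on the right, I would set $\bthe = \BE-\BE_h \in \BH_0(\ccurl,\Omega)$ and invoke Lemma \ref{lemma_residual_IPDG}, which gives
\begin{equation*}
\Re b(\BE-\BE_h,\BE-\BE_h)
\leq
|b(\BE-\BE_h,\BE-\BE_h)|
\lesssim
\eta \enorm{\BE-\BE_h}.
\end{equation*}
For the second term, Lemma \ref{AubinNitsche_IPDG} yields $\omega \|\BE-\BE_h\|_{\ee,\Omega} \lesssim (1+\gbaE)\eta$, hence
\begin{equation*}
2\omega^2 \|\BE-\BE_h\|_{\ee,\Omega}^2
\lesssim
(1+\gbaE)^2 \eta^2.
\end{equation*}

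Combining these two estimates, I obtain
\begin{equation*}
\enorm{\BE-\BE_h}^2
\lesssim
\eta \enorm{\BE-\BE_h}
+
(1+\gbaE)^2 \eta^2.
\end{equation*}
This is a quadratic inequality of the form $X^2 \lesssim \eta X + (1+\gbaE)^2\eta^2$ in the unknown $X \eq \enorm{\BE-\BE_h}$. Using Young's inequality on the cross term, $\eta X \leq \tfrac12 X^2 + C\eta^2$, the $\tfrac12 X^2$ can be absorbed into the left-hand side, leaving $X^2 \lesssim (1+\gbaE)^2 \eta^2$, and taking square roots (noting $1 \leq 1+\gbaE$) gives the claimed bound $\enorm{\BE-\BE_h} \lesssim (1+\gbaE)\eta$.

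The proof is essentially a mechanical assembly of the preceding lemmas, so there is no serious obstacle here; the genuine difficulty has already been front-loaded into Lemma \ref{AubinNitsche_IPDG} (the duality/Aubin--Nitsche argument, where the approximation factor $\gbaE$ enters and where the $\BH(\ddiv)$-versus-$\BH(\ccurl)$ conformity mismatch is handled through the Helmholtz decomposition) and Lemma \ref{lemma_residual_IPDG} (the regular decomposition and Weber-type inequality). The only point requiring minor care in this final step is the quadratic-inequality manipulation: one must keep the constants in the $\lesssim$ relation frequency-independent, which is guaranteed since every invoked estimate is already frequency-explicit in the sense of Section \ref{section_approximation_factor}, so the Young's-inequality absorption does not reintroduce any hidden $\omega$-dependence.
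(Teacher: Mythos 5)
Your proposal is correct and follows exactly the same route as the paper's own proof: G\aa rding's inequality applied to $\BE-\BE_h$, the residual bound of Lemma \ref{lemma_residual_IPDG} for the sesquilinear term, Lemma \ref{AubinNitsche_IPDG} for the $\omega^2\|\BE-\BE_h\|_{\ee,\Omega}^2$ term, and Young's inequality to absorb the cross term. The paper states this more tersely, but the assembly is identical.
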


\begin{proof}
Using \eqref{eq_upper_bound_b_div_free_IPDG}, we have
\begin{equation}
\label{rel_aux_IPDG}
\Re b(\BE-\BE_h,\BE-\BE_h) \lesssim \eta \enorm{\BE-\BE_h}.
\end{equation}
On the other hand, by using G\aa rding inequality \eqref{eq_garding_inequality}, we have
\begin{align}
\label{rel_Garding_IPDG}
\enorm{\BE-\BE_h}^2
\lesssim
\Re b(\BE-\BE_h,\BE-\BE_h) + \omega^2\|\BE-\BE_h\|_{\ee,\Omega}^2,
\end{align}
and the result follows from \eqref{rel_aux_IPDG}, \eqref{rel_Garding_IPDG},
Lemma \ref{AubinNitsche_IPDG} and Young's inequality.
\end{proof}

\subsection{Efficiency} 

In this section, we focus on efficiency estimates. Classically, the proofs rely on
``bubble functions'' and their key properties introduced in Section \ref{bubbles}.

\begin{lemma}\label{eff_eta_div_IPDG}
We have
\begin{equation}
\label{eq_efficiency_eta_div_IPDG}
\eta_{\ddiv,K}
\lesssim
p^{3/2}
\omega\|\BE-\BE_h\|_{\ee,\tK}
+
\mathrm{osc}_{\ddiv,\CT_{K,h}}
\qquad \forall K \in \CT_h.
\end{equation}
\end{lemma}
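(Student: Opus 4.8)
The plan is to run the classical bubble-function efficiency argument locally, treating the volume residual and the face jump of $\eta_{\ddiv,K}$ separately and then summing the finitely many elements and faces of the patch $\CT_{K,h}$. The whole argument rests on two pointwise consequences of the strong form \eqref{eq_maxwell_strong}: taking the divergence of the first equation and using $\div \curl = \zero$ gives $\div(\ee\BE) = (i\omega)^{-1}\div\BJ$, whence
\begin{equation*}
\div(\BJ - i\omega\ee\BE) = 0 \ \text{ in } \Omega,
\qquad
\div\big(\ee(\BE-\BE_h)\big) = \tfrac{1}{i\omega}\,\div(\BJ - i\omega\ee\BE_h) \ \text{ in each } K \in \CT_h,
\end{equation*}
and, since $\div(\ee\BE) \in L^2(\Omega)$ forces $\ee\BE \in \BH(\ddiv,\Omega)$, its normal trace is continuous, i.e.\ $\jmp{\ee\BE}\cdot\bn = 0$ on every interior face. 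Because $\div\BJ$ is not piecewise polynomial, I first split the element residual $r_K \eq \div(\BJ - i\omega\ee\BE_h) = \div(\BJ-\BJ_h) + r_{K,h}$ with $r_{K,h} \eq \div(\BJ_h - i\omega\ee\BE_h) \in \CP_{p-1}(K)$; the first summand will regroup into $\osc_{\ddiv,\CT_{K,h}}$ and only the polynomial part $r_{K,h}$ is fed to the bubble machinery.

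For the volume term I test $r_{K,h}$ against its element bubble. Using \eqref{eq_norm_bubble} I bound $\|r_{K,h}\|_K \lesssim p\,\|b_K^{1/2} r_{K,h}\|_K$ and expand $\|b_K^{1/2}r_{K,h}\|_K^2 = (r_{K,h}, b_K r_{K,h})_K$. Replacing $r_{K,h}$ by $r_K$ modulo the oscillation term and integrating by parts (legitimate since $b_K r_{K,h} \in H^1_0(K)$), the divergence identity above turns $(r_K, b_K r_{K,h})_K$ into $i\omega\big(\ee(\BE-\BE_h), \grad(b_K r_{K,h})\big)_K$. The inverse inequality \eqref{eq_inv_bubble} controls $\|\grad(b_K r_{K,h})\|_K$ by $\cb(p/h_K)\|b_K^{1/2}r_{K,h}\|_K$; after cancelling one factor $\|b_K^{1/2}r_{K,h}\|_K$ and using $\|\ee(\BE-\BE_h)\|_K \lesssim \sqrt{\eemK}\,\|\BE-\BE_h\|_{\ee,K}$ (the local contrast $\eeMK/\eemK$ being absorbed by $\lesssim$), this delivers $\eemK^{-1/2}(h_K/p)\|r_K\|_K \lesssim p^{3/2}\omega\|\BE-\BE_h\|_{\ee,K} + \osc_{\ddiv,K}$.

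For the face jump, fix an interior face $F \subset \partial K$ and set $J_F \eq \jmp{\ee\BE_h}\cdot\bn \in \CP_p(F)$. I bound $\|J_F\|_F \lesssim p\,\|b_F^{1/2}J_F\|_F$ and write $\|b_F^{1/2}J_F\|_F^2 = \langle J_F, \LE(J_F)\rangle_F$. Integrating by parts over the two elements of $\CT_{F,h}$, using $\jmp{\ee\BE}\cdot\bn = 0$ and the divergence identity, and noting that the contributions on $\partial\tF$ vanish because $\LE(J_F) \in H^1_0(\tF)$, I obtain
\begin{equation*}
\langle J_F, \LE(J_F)\rangle_F
=
\sum_{K' \in \CT_{F,h}}
\Big(
\tfrac{1}{i\omega}\big(r_{K'}, \LE(J_F)\big)_{K'}
+
\big(\ee(\BE-\BE_h), \grad \LE(J_F)\big)_{K'}
\Big).
\end{equation*}
The extension estimates \eqref{eq_ext_bubble} control $\|\LE(J_F)\|_{\CT_{F,h}}$ and $\|\grad\LE(J_F)\|_{\CT_{F,h}}$ by $\cb p^{-1}h_F^{1/2}\|J_F\|_F$ and $\cb p\,h_F^{-1/2}\|J_F\|_F$; cancelling $\|J_F\|_F$ expresses $\|J_F\|_F$ through $\|r\|_{\CT_{F,h}}$ (already estimated above) and $\|\ee(\BE-\BE_h)\|_{\CT_{F,h}}$. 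Weighting by $\eemK^{-1/2}\omega\sqrt{h_K/p}$, invoking $h_F \sim h_K$ and the comparability of the weights across the fixed-size patch, and summing over the faces of $K$ yields the face contribution in the required form.

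The main obstacle is not conceptual but a delicate piece of bookkeeping: one must track every power of $p$ and every $\omega$-weight through the bubble and extension estimates so that, after all cancellations, the error appears exactly with the energy weight $\omega\|\BE-\BE_h\|_{\ee,\cdot}$, the constant collapses to $p^{3/2}$, and each term carrying $\BJ-\BJ_h$ regroups precisely into $\osc_{\ddiv,\CT_{K,h}}$. Ensuring that no stray factor of $\omega$ survives outside the energy norm (this is what makes the estimate frequency-explicit) and that the local material weights $\eemK,\eeMK$ may be replaced by their patch values up to the global contrast absorbed by $\lesssim$ are the points demanding the most care; the geometric ingredients—shape-regularity comparability $h_F \sim h_K$ and the finite overlap of the patches—are routine.
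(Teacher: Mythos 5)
Your proposal is correct and follows essentially the same route as the paper's proof: an element-bubble argument applied to the polynomial residual $\div(\BJ_h-i\omega\ee\BE_h)$ combined with the identity $\div(\BJ-i\omega\ee\BE)=0$, a face-bubble/extension argument for $\jmp{\ee\BE_h}\cdot\bn$ using the normal-trace continuity of $\ee\BE$, re-insertion of the volume bound into the face bound, and regrouping of the $\BJ-\BJ_h$ terms into $\osc_{\ddiv,\CT_{K,h}}$. The only (harmless) discrepancy is that the volume step actually yields a factor $p$ rather than the $p^{3/2}$ you state, which is still dominated by the claimed bound.
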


\begin{proof}
Let $K \in \CT_h$, $r_K \eq \div (\BJ_h-i\omega\ee\BE_h)$ and $v_K \eq b_K r_K$.
Integrating by parts, we have
\begin{align*}
%p^{-6} \|r_K\|_K^2
%\lesssim
\|b_K^{1/2}r_K\|_K^2= (r_K,v_K)_K
&=
-i\omega(\ee(\BE-\BE_h),\grad v_K)_K - (\div(\BJ-\BJ_h),v_K)_K
\\
&\leq
\omega\|\ee(\BE-\BE_h)\|_K\|\grad v_K\|_K
+
\|b_K^{1/2}\div(\BJ-\BJ_h)\|_K\|b_K^{1/2}r_K\|_K.
\end{align*}
Recalling \eqref{eq_norm_bubble} and \eqref{eq_inv_bubble}, we have
$\|r_K\|_K\lesssim p\|b_K^{1/2}r_K\|_K $ and
$\|\grad v_K\|_K \lesssim (p/h_K) \|b_K^{1/2}r_K\|_K$, and it follows that
\begin{equation}
\label{tmp_eff_div_K}
\frac{1}{\sqrt{\varepsilon_{\tK,\min}}}\frac{h_K}{p}
\|\div(\BJ-i\omega\BE_h)\|_{K}
\lesssim
p\omega \|\BE-\BE_h\|_{\ee,K} +
\frac{p}{\sqrt{\varepsilon_{\tK,\min}}}\frac{h_K}{p} \|\div(\BJ-\BJ_h)\|_K.
\end{equation}
On the other hand, for $F \in \CF_h^{\rm i} \cap \CF_K$, we set
$r_F \eq \omega \jmp{\ee\BE_h} \cdot \bn_F$ and $v_F \eq \LE(r_F)$.
We have
\begin{align*}
\|b_F^{1/2}r_F\|_F^2= (r_F,v_F)_F
&=
|(\div(i\omega \ee\BE_h),v_F)_{\CT_{F,h}} + i\omega (\ee\BE_h,\grad v_F)_{\CT_{F,h}}|
\\
&=
|(\div(\BJ-i\omega \ee\BE_h),v_F)_{\CT_{F,h}} + i\omega (\ee(\BE-\BE_h),\grad v_F)_{\CT_{F,h}}|
\\
&\leq
\|\div(\BJ-i\omega\ee\BE_h)\|_{\CT_{F,h}}\|v_F\|_{\widetilde F}
+
\omega\|\ee(\BE-\BE_h)\|_{\widetilde F}\|\grad v_F\|_{\widetilde F}.
\end{align*}
Since $\|v_F\|_{\widetilde F} \lesssim (\sqrt{h_F}/p) \|b_F^{1/2}r_F\|_F$
and $\|\grad v_F\|_{\widetilde F} \lesssim (p/\sqrt{h_F}) \|b_F^{1/2}r_F\|_F$
from \eqref{eq_ext_bubble}, we conclude that
\begin{multline}
\label{tmp_eff_div_F}
\frac{1}{\sqrt{\varepsilon_{\tK,\min}}}
\sqrt{\frac{h_F}{p}}\|\jmp{\ee\BE_h} \cdot \bn_F\|_F
\lesssim
\frac{1}{\sqrt{\varepsilon_{\tK,\min}}} \sqrt{\frac{h_F}{p}}p\|b_F^{1/2}r_F\|_F
\\
\lesssim
\frac{p^{1/2}}{\sqrt{\varepsilon_{\tK,\min}}}
\frac{h_F}{p} \|\div(\BJ-i\omega\ee\BE_h)\|_{\CT_{F,h}}
+
p^{1/2}\omega \|\BE-\BE_h\|_{\ee,\tF}.
\end{multline}
Then, \eqref{eq_efficiency_eta_div_IPDG} follows from
\eqref{tmp_eff_div_K} and \eqref{tmp_eff_div_F}.%
\end{proof}

\begin{lemma}
\label{eff_eta_curl_IPDG}
We have
\begin{equation}
\label{eq_efficiency_eta_curl_IPDG}
\eta_{\ccurl,K}
\lesssim
p^{3/2}\left(1+\frac{\omega h_K}{pc_{\tK,\min}}\right) 
\norm{\BE-\BE_h}_{\ccurl,\omega,\tK} + \osc_{0,\CT_{K,h}}
\quad
\forall K \in \CT_h.
\end{equation}
\end{lemma}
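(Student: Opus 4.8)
The plan is to adapt the bubble-function argument of Lemma \ref{eff_eta_div_IPDG} to the $\ccurl$ residual, bounding the volume and face contributions of $\eta_{\ccurl,K}$ separately and then summing. Throughout I would test against the weak formulation \eqref{eq_maxwell_weak} rather than insert the strong form of the PDE, since $\BE$ lies only in $\BH_0(\ccurl,\Omega)$ and $\curl(\cc\curl\BE)$ need not be a function; the bubble test fields are compactly supported, so no boundary terms survive and the weak form applies verbatim on each element and each patch.

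First I would reduce to a polynomial residual. Set $R_K := i\omega\BJ_h + \omega^2\ee\BE_h - \curl(\cc\curl\BE_h) \in \BCP_p(K)$, which differs from the genuine volume residual $i\omega\BJ + \omega^2\ee\BE_h - \curl(\cc\curl\BE_h)$ only by $i\omega(\BJ-\BJ_h)$; a triangle inequality turns that difference into $\osc_{0,\CT_{K,h}}$. Taking the element bubble $v_K := b_K R_K$ and integrating $\curl(\cc\curl\BE_h)$ by parts (the boundary term drops since $v_K$ vanishes on $\partial K$), the weak form lets me rewrite $(R_K,v_K)_K$ as $i\omega(\BJ_h-\BJ,v_K)_K - \omega^2(\ee(\BE-\BE_h),v_K)_K + (\cc\curl(\BE-\BE_h),\curl v_K)_K$. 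Bounding each term by Cauchy--Schwarz and invoking \eqref{eq_norm_bubble} and \eqref{eq_inv_bubble}, namely $\|R_K\|_K \lesssim p\|b_K^{1/2}R_K\|_K$, $\|v_K\|_K \lesssim \|b_K^{1/2}R_K\|_K$ and $\|\curl v_K\|_K \le \|\grad v_K\|_K \lesssim (p/h_K)\|b_K^{1/2}R_K\|_K$, then dividing by $\|b_K^{1/2}R_K\|_K$, yields a bound on $\sqrt{\mmMK}(h_K/p)\|R_K\|_K$ in which the $\cc\curl$-error enters with weight $p$ and the mass term $\omega^2(\ee(\BE-\BE_h),\cdot)$ carries the frequency-dependent weight that, after the material bookkeeping, assembles into the announced factor $\omega h_K/(pc_{\tK,\min})$ multiplying $\omega\|\BE-\BE_h\|_{\ee,\tK}$.

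Next I would treat the tangential jump. For $F \in \CF_h^{\rm i} \cap \CF_K$ I set $r_F := \jmp{\cc\curl\BE_h}\times\bn_F$ and $v_F := \LE(r_F)$, integrate by parts element by element over $\CT_{F,h}$, and again substitute the weak form. This expresses $\langle r_F, v_F\rangle_F$ as a combination of the $\cc\curl$-error and the $\omega^2$-mass error over $\tF$, the oscillation $\BJ-\BJ_h$, and the \emph{element} residual $R_{K'}$ on the patch, the last of which is already controlled by the volume estimate. Using \eqref{eq_ext_bubble} in the form $\|v_F\|_{\tF} \lesssim (\sqrt{h_F}/p)\|b_F^{1/2}r_F\|_F$ and $\|\curl v_F\|_{\tF} \le \|\grad v_F\|_{\tF} \lesssim (p/\sqrt{h_F})\|b_F^{1/2}r_F\|_F$, together with $\|r_F\|_F \lesssim p\|b_F^{1/2}r_F\|_F$ from \eqref{eq_norm_bubble}, I divide, multiply by $\sqrt{\mmMK}\sqrt{h_F/p}$, and use shape-regularity to compare $h_F$ with $h_K$. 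The $\cc\curl$-error term then picks up the weight $\sqrt{h_F/p}\cdot p\cdot(p/\sqrt{h_F}) = p^{3/2}$, which is exactly the power appearing in the statement, while the mass and oscillation terms appear with lower powers of $p$ and are absorbed.

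Finally I would sum the two contributions, dominate both $\omega\|\BE-\BE_h\|_{\ee,\tK}$ and $\|\curl(\BE-\BE_h)\|_{\cc,\tK}$ by the energy norm $\norm{\BE-\BE_h}_{\ccurl,\omega,\tK}$, and collect the $\BJ-\BJ_h$ contributions into $\osc_{0,\CT_{K,h}}$. The main obstacle is the material-and-frequency bookkeeping: one must carry the weights $\sqrt{\mmMK}$, $\cc=\mm^{-1}$ and $\ee$ through every Cauchy--Schwarz step so that the $\cc\curl$-terms recombine into the $\cc$-weighted norm, up to the contrasts $\mu_{\tK,\max}/\mu_{\tK,\min}$ and $\varepsilon_{\tK,\max}/\varepsilon_{\tK,\min}$ hidden in $\lesssim$, and so that the $\omega^2$ mass term produces the dimensionless local sampling factor $\omega h_K/(pc_{\tK,\min})$ (the Maxwell analogue of the $\omega h/p$ factor familiar from the Helmholtz efficiency analysis) rather than a bare power of $\omega$. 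Getting these weights consistent is the only genuinely delicate point; the rest is the standard element/face bubble computation.
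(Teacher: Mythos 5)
Your proposal is correct and follows essentially the same route as the paper: the element bubble $b_K$ applied to the polynomial residual $i\omega\BJ_h+\omega^2\ee\BE_h-\curl(\cc\curl\BE_h)$ with the inverse estimates \eqref{eq_norm_bubble}--\eqref{eq_inv_bubble} for the volume term, the face bubble extension $\LE$ with \eqref{eq_ext_bubble} for the tangential jump (re-using the volume residual bound on the patch), and the same splitting of $\BJ-\BJ_h$ into $\osc_{0,\CT_{K,h}}$. The weights you identify ($p^{3/2}$ on the jump term, $\omega h_K/(pc_{\tK,\min})$ on the mass term) are exactly those appearing in the paper's intermediate estimates.
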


\begin{proof}
Let $K \in \CT_h$, $\br_K \eq i\omega\BJ_h-\curl(\cc\curl\BE_h)+\omega^2\ee\BE_h$
and $\bv_K \eq b_K \br_K$. After integrating by parts, we have
\begin{align*}
\|b_K^{1/2}\br_K\|_K^2
=
(\br_K,\bv_K)_{K}
&=
(i\omega\BJ_h-\curl(\cc\curl\BE_h)+\omega^2\ee\BE_h,\bv_K)_K
\\
&=
(\cc\curl(\BE-\BE_h),\curl\bv_K)_K-(\omega^2\ee(\BE-\BE_h),\bv_K)_K-(i\omega(\BJ-\BJ_h),\bv_K)_K
\\
&\leq
\|\cc\curl(\BE-\BE_h)\|_K\|\curl\bv_K\|_K
+
\omega^2\|b_K^{1/2}\ee(\BE-\BE_h)\|_K\|b_K^{1/2}\br_K\|_K
\\
&\quad+\omega\|b_K^{1/2}(\BJ-\BJ_h)\|_K\|b_K^{1/2}\br_K\|_K.
\end{align*}
Thanks to \eqref{eq_norm_bubble} and \eqref{eq_inv_bubble}, we see that
$\|\curl\bv_K\|_K \leq \|\grad\bv_K\|_K \lesssim (p/h_K)\|b_K^{1/2}\br_K\|_K$
and $\|\br_K\|_K\lesssim p\|b_K^{1/2}\br_K\|_K$. Then, we find
\begin{align*}
\sqrt{\mu_{\tK,\max}}\frac{h_K}{p}\|\br_K\|_K
\lesssim
p \left (\|\curl(\BE-\BE_h)\|_{\cc,K}
\!+\!
\frac{\omega h_K}{p c_{\tK,\min}}\omega\|\BE-\BE_h\|_{\ee,K}
\!+\!
\sqrt{\mu_{\tK,\max}}\omega\frac{h_K}{p}\|\BJ-\BJ_h\|_K \right ),
\end{align*}
and hence
\begin{multline}
\label{eta_curl_K}
\sqrt{\mu_{\tK,\max}}\frac{h_K}{p}\|i\omega\BJ-\curl(\cc\curl\BE_h)+\omega^2\ee\BE_h\|_K
\\
\lesssim
p \left(1 + \frac{\omega h_K}{pc_{\tK,\min}}\right )\norm{\BE-\BE_h}_{\ccurl,\omega,K}
+
p^{-1/2}\osc_K.
\end{multline}
On the other hand, for $F \in \CF_h^{\rm i} \cap \CF_K$,
we set $\br_F \eq \jmp{\cc\curl\BE_h} \times \bn_F$ and $\bv_F \eq \LE(\br_F)$. Then,
\begin{align*}
&\|b_F^{1/2}\br_F\|_F^2
=
(\br_F,\bv_F)_F
\\
&=
|(\curl (\cc\curl \BE_h),\bv_F)_{\CT_{F,h}} - (\cc \curl \BE_h,\curl \bv_F)_{\CT_{F,h}}|
\\
&=
|(\curl (\cc\curl (\BE-\BE_h)),\bv_F)_{\CT_{F,h}}
-
(\cc \curl (\BE-\BE_h),\curl \bv_F)_{\CT_{F,h}}|
\\
&=
|(i\omega\BJ+\omega^2 \ee\BE -\curl (\cc\curl \BE_h),\bv_F)_{\CT_{F,h}}
-
(\cc \curl (\BE-\BE_h),\curl \bv_F)_{\CT_{F,h}}|
\\
&=
|(i\omega\BJ+\omega^2 \ee\BE_h -\curl (\cc\curl \BE_h),\bv_F)_{\CT_{F,h}}
+
\omega^2(\ee(\BE-\BE_h),\bv_F)_{\CT_{F,h}}
-
(\cc \curl (\BE-\BE_h),\curl \bv_F)_{\CT_{F,h}}|.
\end{align*}
Recalling \eqref{eq_ext_bubble},
$\|\bv_F\|_{\tF} \lesssim (\sqrt{h_F}/p)\|b_F^{1/2}\br_F\|_F$
and
$\|\curl \bv_F\|_{\tF} \lesssim (p/\sqrt{h_F}) \|b_F^{1/2}\br_F\|_F$,
which leads to
\begin{multline*}
\sqrt{\mu_{\tK,\max}} \sqrt{\frac{h_K}{p}} \|\br_F\|_F
\lesssim \sqrt{\mu_{\tK,\max}} \sqrt{\frac{h_K}{p}} p\|b_F^{1/2}\br_F\|_F
\lesssim
\\
p\left (
\sqrt{\frac{\mu_{\tK,\max}}{p}}\frac{h_F}{p} \|i\omega \BJ+\omega^2\ee\BE_h-\curl(\cc\curl \BE_h)\|_{\CT_{F,h}}
+
\frac{1}{\sqrt{p}}\frac{\omega h_F}{pc_{\tF,\min}} \omega \|\BE-\BE_h\|_{\ee,\tF}
+
\sqrt{p} \|\curl(\BE-\BE_h)\|_{\cc,\tF}
\right ),
\end{multline*}
and it follows that
\begin{multline}
\label{eta_curl_F}
\sqrt{\mu_{\tK,\max}}\sqrt{\frac{h_K}{p}}
\|\jmp{\cc\curl\BE_h}\times \bn_F\|_F
\lesssim
\\
p^{3/2}\left (
1 + \frac{\omega h_F}{p c_{\tF,\min}}
\right )
\norm{\BE-\BE_h}_{\ccurl,\omega,\tF}
+
p^{1/2}\sqrt{\mu_{\tK,\max}}\frac{h_F}{p} \|i\omega \BJ+\omega^2\ee\BE_h-\curl(\cc\curl \BE_h)\|_{\CT_{F,h}}.
\end{multline}
Thus, \eqref{eq_efficiency_eta_curl_IPDG} follows from \eqref{eta_curl_K} and \eqref{eta_curl_F}.%
\end{proof}
	
We now state our efficiency estimate, that is a direct consequence of Lemmas
\ref{eff_eta_div_IPDG} and \ref{eff_eta_curl_IPDG}.
	
\begin{theorem}[Efficiency]\label{eff_IPDG}
We have
\begin{equation*}
\eta_K
\lesssim
p^{3/2}\left(1+\frac{\omega h_K}{pc_{\tK,\min}}\right) 
\tnorm{\BE-\BE_h}_{\ccurl,\omega,\tK} + \osc_{\CT_{K,h}}
\qquad
\forall K \in \CT_h.
\end{equation*}
\end{theorem}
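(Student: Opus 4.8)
The plan is to obtain the estimate purely by combining the two preceding lemmas, since all the genuine analytic work (integration by parts, the bubble-function inverse estimates from Section \ref{bubbles}, and the extension operator bounds) has already been spent in Lemmas \ref{eff_eta_div_IPDG} and \ref{eff_eta_curl_IPDG}. The only remaining task is bookkeeping: splitting $\eta_K$ into its two components, invoking the lemmas, and then absorbing the divergence-part contributions into the curl-part bound so that a single error factor survives.

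Concretely, I would first start from the definition $\eta_K^2 = \eta_{\ddiv,K}^2 + \eta_{\ccurl,K}^2$ and use the elementary inequality $\sqrt{a^2+b^2} \leq a+b$ to get $\eta_K \leq \eta_{\ddiv,K} + \eta_{\ccurl,K}$. Applying Lemma \ref{eff_eta_div_IPDG} to the first summand and Lemma \ref{eff_eta_curl_IPDG} to the second then yields
\begin{equation*}
\eta_K
\lesssim
p^{3/2}\omega\|\BE-\BE_h\|_{\ee,\tK}
+
p^{3/2}\left(1+\frac{\omega h_K}{pc_{\tK,\min}}\right)\norm{\BE-\BE_h}_{\ccurl,\omega,\tK}
+
\osc_{\ddiv,\CT_{K,h}}
+
\osc_{0,\CT_{K,h}}.
\end{equation*}

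Next I would merge the two error terms. Directly from the definition $\norm{\bv}_{\ccurl,\omega,\tK}^2 = \omega^2\|\bv\|_{\ee,\tK}^2 + \|\curl\bv\|_{\cc,\tK}^2$ one has $\omega\|\BE-\BE_h\|_{\ee,\tK} \leq \norm{\BE-\BE_h}_{\ccurl,\omega,\tK}$, and since trivially $1 \leq 1 + \omega h_K/(pc_{\tK,\min})$, the first term on the right-hand side is dominated by the second. Finally, the two oscillation contributions are collapsed into one via $\osc_{\CT_{K,h}}^2 = \osc_{0,\CT_{K,h}}^2 + \osc_{\ddiv,\CT_{K,h}}^2$, which gives $\osc_{0,\CT_{K,h}} + \osc_{\ddiv,\CT_{K,h}} \leq \sqrt{2}\,\osc_{\CT_{K,h}}$. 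Combining these observations delivers exactly the claimed bound.

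I do not anticipate any real obstacle here: the proof is a short combination step, and the only point requiring a moment of care is recognizing the norm-domination $\omega\|\cdot\|_{\ee,\tK} \leq \norm{\cdot}_{\ccurl,\omega,\tK}$ so that the $\eta_{\ddiv,K}$ estimate, which a priori controls only the $\BL^2$-part of the energy norm, can be folded into the full energy norm appearing in the $\eta_{\ccurl,K}$ estimate without introducing any new frequency-dependent factor.
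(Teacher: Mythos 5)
Your proposal is correct and is exactly the argument the paper intends: the theorem is stated there as ``a direct consequence of Lemmas \ref{eff_eta_div_IPDG} and \ref{eff_eta_curl_IPDG}'', and your bookkeeping (splitting $\eta_K \leq \eta_{\ddiv,K}+\eta_{\ccurl,K}$, absorbing $\omega\|\BE-\BE_h\|_{\ee,\tK}$ into $\norm{\BE-\BE_h}_{\ccurl,\omega,\tK}$, and collapsing the two oscillation terms) fills in precisely the steps the authors leave implicit.
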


\section{Numerical experiments}
\label{sec_numerics}

In this section, we present three numerical examples in 2D with N\'ed\'elec elements
of the first family that illustrate our main findings. While the previous
analysis was rigorously carried out in 3D, the key results also apply in 2D
with the usual modifications for the curl operator.

In Experiments \ref{experiment_cavity} and \ref{experiment_pml}, we employ
structured meshes. Given $h \eq 1/n$, these meshes are defined by first
introducing a $n \times n$ Cartesian grid of the domain and then splitting
each square of the grid into four triangles by joining the barycenter of
the square with each of its vertices. On the other hand, Experiment
\ref{experiment_scattering} relies on unstructured meshes that are generated
using the {\tt MMG} software package \cite{mmg3d}.

We employ the {\tt LDLT} factorization from {\tt MUMPS} software package
to solve sparse linear systems \cite{amestoy_duff_lexcellent_2000a}.

\subsection{Analytical solution in a PEC cavity}
\label{experiment_cavity}

We consider the square $\Omega \eq (-1,1)^2$, with coefficients $\ee \eq \BI$ and $\mu \eq 1$.
The source term is $\BJ \eq \be_1$, and the corresponding solution to \eqref{eq_maxwell_strong}
reads
\begin{equation}
\label{eq_sol_cavity}
\BE(\bx) = \frac{1}{\omega} \left ( \frac{\cos(\omega \bx_2)}{\cos \omega} - 1\right ) \be_1.
\end{equation}
As the problem under consideration does not feature absorption, there are resonance frequencies
for which it is not well-posed. These resonances are attained at $\omega = k\pi/2$,
$k \in \mathbb N^\star$, which is in agreement with \eqref{eq_sol_cavity}.
This example is especially interesting since the explicit expression
\begin{equation}
\label{eq_estimate_gbaE_simplified}
\gbaE
\leq
C(\Omega,p)
\left (
\frac{\omega h}{c}
+
\frac{\omega}{\delta}
\left (
\frac{\omega h}{c}
\right )^{\!p}
\ \!\right ),
\end{equation}
where $\delta$ is the distance between $\omega$ and closest resonance,
is provided in \cite{chaumontfrelet_vega_2021a} for the approximation factor.

We consider two sequences of frequencies to illustrate the influence of the
approximation factor $\gbaE$. On the one hand, we consider a sequence of frequencies
tending towards the resonance frequency $\omega_{\rm r} = 3\pi/2$. On the other hand,
we also study a sequence of increasing frequencies uniformly separated from the resonance set.

The first sequence of frequencies takes the form $\omega_{\delta} \eq \omega_{\rm r} + \delta(\pi/2)$
with $\delta = 1/2,1/4,1/8,\dots,1/64$. Figure \ref{figure_pec_cavity_resonance} presents the
corresponding results. The second sequence of frequencies reads
$\omega_\ell \eq (\ell+3/10) \times 2\pi$, for $\ell \eq 1,2,4,8,16,32$.
The corresponding results are reproduced in Figure \ref{figure_pec_cavity_high}.

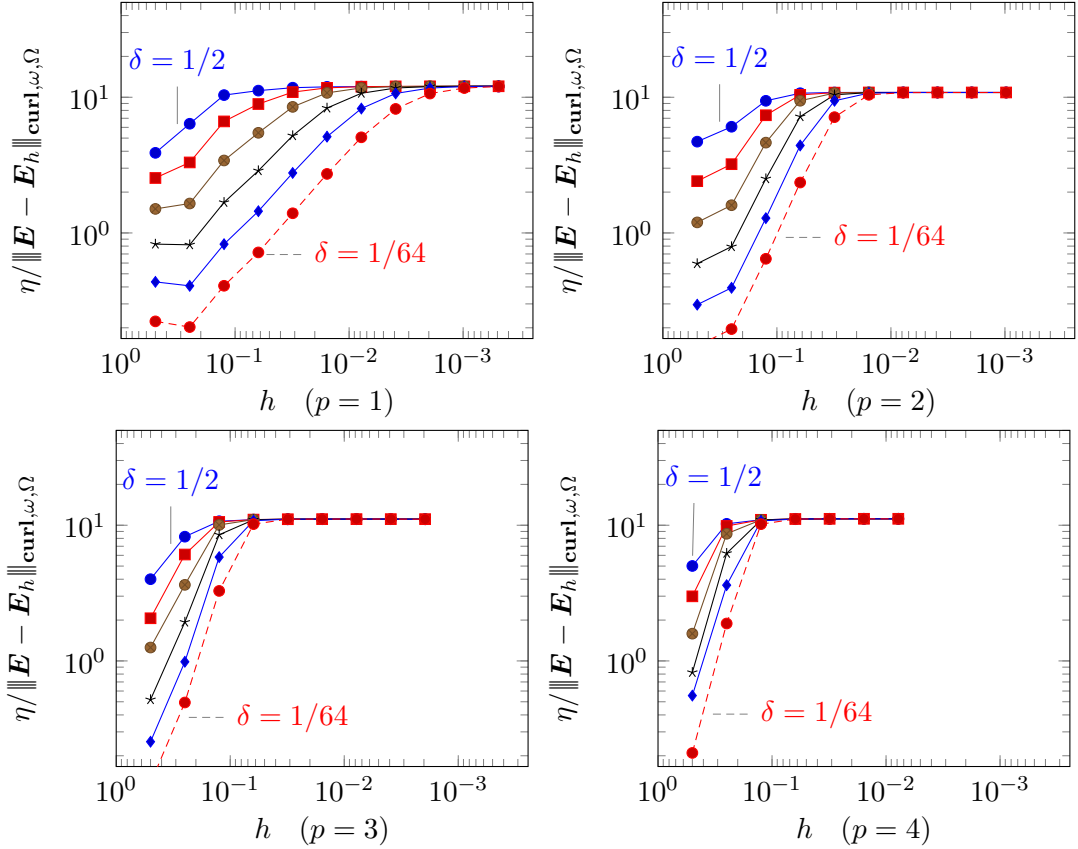
\begin{figure}
\begin{minipage}{.45\linewidth}
\begin{tikzpicture}
\begin{axis}%
[%
	width  = \linewidth,
	xlabel = {$h$ $\;$ ($p = 1$)},
	ylabel = {$\eta/\norm{\BE-\BE_h}_{\ccurl,\omega,\Omega}$},
	xmode = log,
	ymode = log,
	x dir  = reverse,
	ymin = 1./6,
	ymax = 50,
	xmax = 1,
	xmin = 1/4096
]

\plot%
table[x expr = 1./\thisrow{N}, y expr=\thisrow{eta}/\thisrow{err}]%
{figures/data/resonance/P0/curve_1.3.txt}%
node[pos=0.075,pin=90:{$\delta = 1/2$}] {};

\plot%
table[x expr = 1./\thisrow{N}, y expr=\thisrow{eta}/\thisrow{err}]%
{figures/data/resonance/P0/curve_1.275.txt};

\plot%
table[x expr = 1./\thisrow{N}, y expr=\thisrow{eta}/\thisrow{err}]%
{figures/data/resonance/P0/curve_1.2625.txt};

\plot%
table[x expr = 1./\thisrow{N}, y expr=\thisrow{eta}/\thisrow{err}]%
{figures/data/resonance/P0/curve_1.25625.txt};

\plot%
table[x expr = 1./\thisrow{N}, y expr=\thisrow{eta}/\thisrow{err}]%
{figures/data/resonance/P0/curve_1.25313.txt};

\plot%
table[x expr = 1./\thisrow{N}, y expr=\thisrow{eta}/\thisrow{err}]%
{figures/data/resonance/P0/curve_1.25156.txt} %
node[pos=0.3,pin=0:{$\delta = 1/64$}] {};

\end{axis}
\end{tikzpicture}
\end{minipage}
\begin{minipage}{.45\linewidth}
\begin{tikzpicture}
\begin{axis}%
[%
	width  = \linewidth,
	xlabel = {$h$ $\;$ ($p = 2$)},
	ylabel = {$\eta/\norm{\BE-\BE_h}_{\ccurl,\omega,\Omega}$},
	xmode = log,
	ymode = log,
	x dir  = reverse,
	ymin = 1/6,
	ymax = 1/.02,
	xmax = 1,
	xmin = 1/4096
]

\plot%
table[x expr = 1./\thisrow{N}, y expr=\thisrow{eta}/\thisrow{err}]%
{figures/data/resonance/P1/curve_1.3.txt}
node[pos=0.075,pin=90:{$\delta = 1/2$}] {};

\plot%
table[x expr = 1./\thisrow{N}, y expr=\thisrow{eta}/\thisrow{err}]%
{figures/data/resonance/P1/curve_1.275.txt};

\plot%
table[x expr = 1./\thisrow{N}, y expr=\thisrow{eta}/\thisrow{err}]%
{figures/data/resonance/P1/curve_1.2625.txt};

\plot%
table[x expr = 1./\thisrow{N}, y expr=\thisrow{eta}/\thisrow{err}]%
{figures/data/resonance/P1/curve_1.25625.txt};

\plot%
table[x expr = 1./\thisrow{N}, y expr=\thisrow{eta}/\thisrow{err}]%
{figures/data/resonance/P1/curve_1.25313.txt};

\plot%
table[x expr = 1./\thisrow{N}, y expr=\thisrow{eta}/\thisrow{err}]%
{figures/data/resonance/P1/curve_1.25156.txt} %
node[pos=0.3,pin=0:{$\delta = 1/64$}] {};

\end{axis}
\end{tikzpicture}
\end{minipage}

\begin{minipage}{.45\linewidth}
\begin{tikzpicture}
\begin{axis}%
[%
	width  = \linewidth,
	xlabel = {$h$ $\;$ ($p = 3$)},
	ylabel = {$\eta/\norm{\BE-\BE_h}_{\ccurl,\omega,\Omega}$},
	xmode = log,
	ymode = log,
	x dir  = reverse,
	ymin = 1/6,
	ymax = 1/.02,
	xmax = 1,
	xmin = 1/4096
]

\plot%
table[x expr = 1./\thisrow{N}, y expr=\thisrow{eta}/\thisrow{err}]%
{figures/data/resonance/P2/curve_1.3.txt}
node[pos=0.1,pin=90:{$\delta = 1/2$}] {};

\plot%
table[x expr = 1./\thisrow{N}, y expr=\thisrow{eta}/\thisrow{err}]%
{figures/data/resonance/P2/curve_1.275.txt};

\plot%
table[x expr = 1./\thisrow{N}, y expr=\thisrow{eta}/\thisrow{err}]%
{figures/data/resonance/P2/curve_1.2625.txt};

\plot%
table[x expr = 1./\thisrow{N}, y expr=\thisrow{eta}/\thisrow{err}]%
{figures/data/resonance/P2/curve_1.25625.txt};

\plot%
table[x expr = 1./\thisrow{N}, y expr=\thisrow{eta}/\thisrow{err}]%
{figures/data/resonance/P2/curve_1.25313.txt};

\plot%
table[x expr = 1./\thisrow{N}, y expr=\thisrow{eta}/\thisrow{err}]%
{figures/data/resonance/P2/curve_1.25156.txt} %
node[pos=0.15,pin=0:{$\delta = 1/64$}] {};

\end{axis}
\end{tikzpicture}
\end{minipage}
\begin{minipage}{.45\linewidth}
\begin{tikzpicture}
\begin{axis}%
[%
	width  = \linewidth,
	xlabel = {$h$ $\;$ ($p = 4$)},
	ylabel = {$\eta/\norm{\BE-\BE_h}_{\ccurl,\omega,\Omega}$},
	xmode = log,
	ymode = log,
	x dir  = reverse,
	ymin = 1/6,
	ymax = 1/.02,
	xmax = 1,
	xmin = 1/4096
]

\plot%
table[x expr = 1./\thisrow{N}, y expr=\thisrow{eta}/\thisrow{err}]%
{figures/data/resonance/P3/curve_1.3.txt}%
node[pos=0.0,pin={[pin distance=.7cm]88:{\hspace{.5cm}$\delta = 1/2$}}] {};

\plot%
table[x expr = 1./\thisrow{N}, y expr=\thisrow{eta}/\thisrow{err}]%
{figures/data/resonance/P3/curve_1.275.txt};

\plot%
table[x expr = 1./\thisrow{N}, y expr=\thisrow{eta}/\thisrow{err}]%
{figures/data/resonance/P3/curve_1.2625.txt};

\plot%
table[x expr = 1./\thisrow{N}, y expr=\thisrow{eta}/\thisrow{err}]%
{figures/data/resonance/P3/curve_1.25625.txt};

\plot%
table[x expr = 1./\thisrow{N}, y expr=\thisrow{eta}/\thisrow{err}]%
{figures/data/resonance/P3/curve_1.25313.txt};

\plot%
table[x expr = 1./\thisrow{N}, y expr=\thisrow{eta}/\thisrow{err}]%
{figures/data/resonance/P3/curve_1.25156.txt} %
node[pos=0.1,pin=0:{$\delta = 1/64$}] {};

\end{axis}
\end{tikzpicture}
\end{minipage}
\caption{PEC cavity: near resonance example}
\label{figure_pec_cavity_resonance}
\vspace{-0.75cm}
\end{figure}

\begin{figure}
\begin{minipage}{.45\linewidth}
\begin{tikzpicture}
\begin{axis}%
[%
	width  = \linewidth,
	xlabel = {$h$ $\;$ ($p = 1$)},
	ylabel = {$\eta/\norm{\BE-\BE_h}_{\ccurl,\omega,\Omega}$},
	xmode = log,
	ymode = log,
	x dir  = reverse,
	ymax = 50,
	ymin = 0.1666666,
	xmax = 1,
	xmin = 1/4096
]

\plot%
table[x expr = 1./\thisrow{N}, y expr=\thisrow{eta}/\thisrow{err}]%
{figures/data/high/P0/curve_1.3.txt}%
node[pos=0.25,pin=90:{$\ell = 1$}] {};

\plot%
table[x expr = 1./\thisrow{N}, y expr=\thisrow{eta}/\thisrow{err}]%
{figures/data/high/P0/curve_2.3.txt};

\plot%
table[x expr = 1./\thisrow{N}, y expr=\thisrow{eta}/\thisrow{err}]%
{figures/data/high/P0/curve_4.3.txt};

\plot%
table[x expr = 1./\thisrow{N}, y expr=\thisrow{eta}/\thisrow{err}]%
{figures/data/high/P0/curve_8.3.txt};

\plot%
table[x expr = 1./\thisrow{N}, y expr=\thisrow{eta}/\thisrow{err}]%
{figures/data/high/P0/curve_16.3.txt};

\plot%
table[x expr = 1./\thisrow{N}, y expr=\thisrow{eta}/\thisrow{err}]%
{figures/data/high/P0/curve_32.3.txt}%
node[pos=0.9,pin=-90:{$\ell = 32$}] {};

\end{axis}
\end{tikzpicture}
\end{minipage}
\begin{minipage}{.45\linewidth}
\begin{tikzpicture}
\begin{axis}%
[%
	width  = \linewidth,
	xlabel = {$h$ $\;$ ($p = 2$)},
	ylabel = {$\eta/\norm{\BE-\BE_h}_{\ccurl,\omega,\Omega}$},
	xmode = log,
	ymode = log,
	x dir  = reverse,
	ymax = 50,
	ymin = 0.1666666,
	xmax = 1,
	xmin = 1/4096
]

\plot%
table[x expr = 1./\thisrow{N}, y expr=\thisrow{eta}/\thisrow{err}]%
{figures/data/high/P1/curve_1.3.txt}
node[pos=0.15,pin=90:{$\ell = 1$}] {};

\plot%
table[x expr = 1./\thisrow{N}, y expr=\thisrow{eta}/\thisrow{err}]%
{figures/data/high/P1/curve_2.3.txt};

\plot%
table[x expr = 1./\thisrow{N}, y expr=\thisrow{eta}/\thisrow{err}]%
{figures/data/high/P1/curve_4.3.txt};

\plot%
table[x expr = 1./\thisrow{N}, y expr=\thisrow{eta}/\thisrow{err}]%
{figures/data/high/P1/curve_8.3.txt};

\plot%
table[x expr = 1./\thisrow{N}, y expr=\thisrow{eta}/\thisrow{err}]%
{figures/data/high/P1/curve_16.3.txt};

\plot%
table[x expr = 1./\thisrow{N}, y expr=\thisrow{eta}/\thisrow{err}]%
{figures/data/high/P1/curve_32.3.txt}%
node[pos=0.8,pin=-45:{$\ell = 32$}] {};

\end{axis}
\end{tikzpicture}
\end{minipage}

\begin{minipage}{.45\linewidth}
\begin{tikzpicture}
\begin{axis}%
[%
	width  = \linewidth,
	xlabel = {$h$ $\;$ ($p = 3$)},
	ylabel = {$\eta/\norm{\BE-\BE_h}_{\ccurl,\omega,\Omega}$},
	xmode = log,
	ymode = log,
	x dir  = reverse,
	ymax = 50,
	ymin = 0.1666666,
	xmax = 1,
	xmin = 1/4096
]

\plot%
table[x expr = 1./\thisrow{N}, y expr=\thisrow{eta}/\thisrow{err}]%
{figures/data/high/P2/curve_1.3.txt}
node[pos=0.15,pin={[pin distance=0.5cm]90:{$\ell = 1$}}] {};

\plot%
table[x expr = 1./\thisrow{N}, y expr=\thisrow{eta}/\thisrow{err}]%
{figures/data/high/P2/curve_2.3.txt};

\plot%
table[x expr = 1./\thisrow{N}, y expr=\thisrow{eta}/\thisrow{err}]%
{figures/data/high/P2/curve_4.3.txt};

\plot%
table[x expr = 1./\thisrow{N}, y expr=\thisrow{eta}/\thisrow{err}]%
{figures/data/high/P2/curve_8.3.txt};

\plot%
table[x expr = 1./\thisrow{N}, y expr=\thisrow{eta}/\thisrow{err}]%
{figures/data/high/P2/curve_16.3.txt};

\plot%
table[x expr = 1./\thisrow{N}, y expr=\thisrow{eta}/\thisrow{err}]%
{figures/data/high/P2/curve_32.3.txt}
node[pos=0.8,pin=-45:{$\ell = 32$}] {};

\end{axis}
\end{tikzpicture}
\end{minipage}
\begin{minipage}{.45\linewidth}
\begin{tikzpicture}
\begin{axis}%
[%
	width  = \linewidth,
	xlabel = {$h$ $\;$ ($p = 4$)},
	ylabel = {$\eta/\norm{\BE-\BE_h}_{\ccurl,\omega,\Omega}$},
	xmode = log,
	ymode = log,
	x dir  = reverse,
	ymax = 50,
	ymin = 0.1666666,
	xmax = 1,
	xmin = 1/4096
]

\plot%
table[x expr = 1./\thisrow{N}, y expr=\thisrow{eta}/\thisrow{err}]%
{figures/data/high/P3/curve_1.3.txt}
node[pos=0.15,pin={[pin distance=0.5cm]90:{$\ell = 1$}}] {};

\plot%
table[x expr = 1./\thisrow{N}, y expr=\thisrow{eta}/\thisrow{err}]%
{figures/data/high/P3/curve_2.3.txt};

\plot%
table[x expr = 1./\thisrow{N}, y expr=\thisrow{eta}/\thisrow{err}]%
{figures/data/high/P3/curve_4.3.txt};

\plot%
table[x expr = 1./\thisrow{N}, y expr=\thisrow{eta}/\thisrow{err}]%
{figures/data/high/P3/curve_8.3.txt};

\plot%
table[x expr = 1./\thisrow{N}, y expr=\thisrow{eta}/\thisrow{err}]%
{figures/data/high/P3/curve_16.3.txt};

\plot%
table[x expr = 1./\thisrow{N}, y expr=\thisrow{eta}/\thisrow{err}]%
{figures/data/high/P3/curve_32.3.txt}
node[pos=0.8,pin=-45:{$\ell = 32$}] {};

\end{axis}
\end{tikzpicture}
\end{minipage}
\caption{PEC cavity: high frequency example}
\label{figure_pec_cavity_high}
\vspace{-0.25cm}
\end{figure}
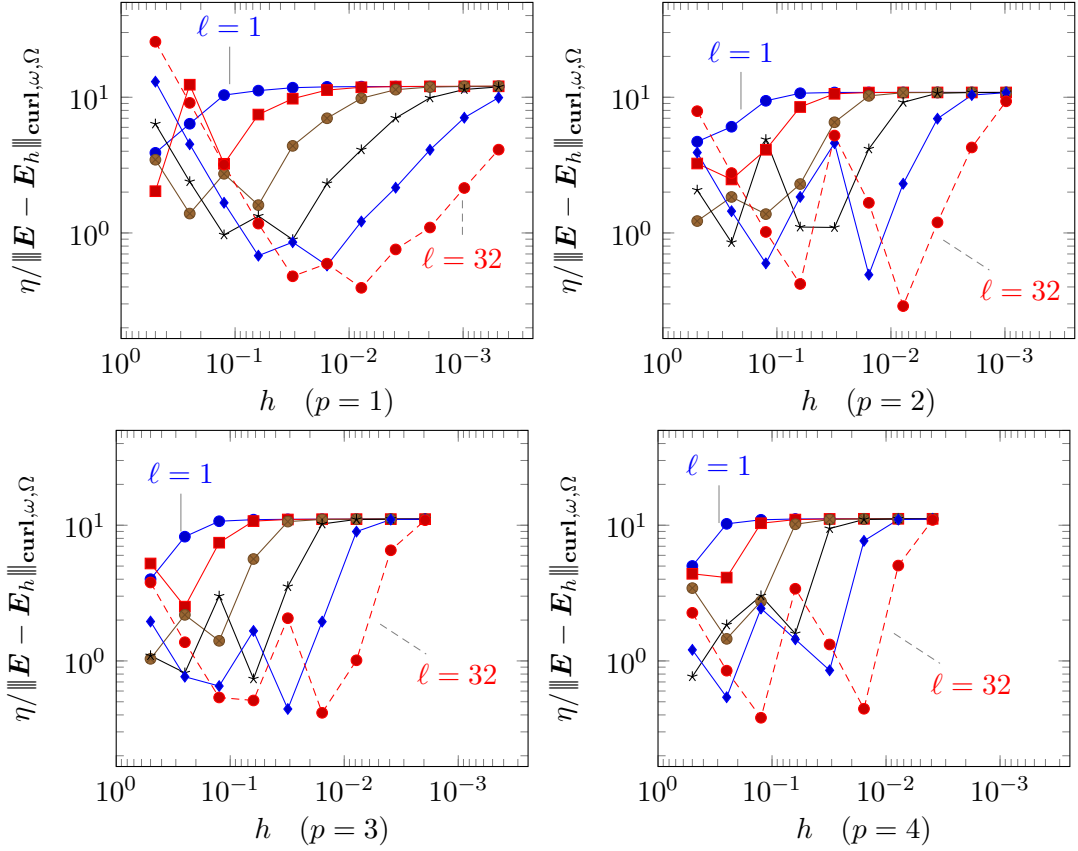

Considering a fixed polynomial degree $p$, in both cases, one sees that when the
frequency gets closer to the resonance value or is increased, the reliability constant
is larger for coarse meshes. Asymptotically, when $h \to 0$ as the mesh is refined,
the same effectivity index is achieved for all frequencies. This  highlights our key
theoretical finding, stating that the reliability and efficiency constants
are independent of the frequency if the mesh is sufficiently refined.

We also observe that the ``asymptotic'' effectivity index is achieved faster for higher values
of $p$. This is in perfect accordance with our analysis
(see in particular \eqref{eq_estimate_gbaE_simplified}),
since the approximation factor decreases when $p$ is increased for a fixed mesh.

\subsection{Analytical solution in a PML medium}
\label{experiment_pml}

For this experiment, we still consider the square $\Omega_0 \eq (-1,1)^2$,
that we surround with a PML of thickness $\ell \eq 1/4$. Hence, the
domain is $\Omega \eq (-1-\ell,1+\ell)^2$. Following
\cite{berenger_1994,berenger_1996,monk_2003a}, the coefficients are defined as
\begin{equation*}
\ee = \left (
\begin{array}{cc}
d_2/d_1 & 0
\\
0       & d_1/d_2
\end{array}
\right ),
\qquad
\mu = d_1 d_2,
\end{equation*}
where $d_j \eq d_j(\bx_j) \eq 1 + \sigma/(i\omega)\mathbf 1_{\bx_j > 1}$ with
$\sigma = (3/4)\omega$. Notice that $\ee = \BTI$ and $\mu = 1$ inside $\Omega_0$
and are only modified inside the PML.

We consider a plane wave travelling in the direction $\bd \eq (\cos \phi, \sin \phi)$ and
polarized along $\bp \eq (\sin \phi, -\cos \phi)$, with $\phi = \pi/12$.
We thus let $\boldsymbol \xi_\phi(\bx) \eq \bp e^{-i\omega \bd \cdot \bx}$.
Then, we separate the ``total'' and ``scattered'' field regions with
a cutoff function $\chi(\bx) \eq \widehat \chi(|\bx|)$, where
$\widehat \chi$ is the unique element of $C^2(\mathbb R)$ such that
$\widehat \chi(t) = 1$ if $t < 0.8$, $\widehat \chi(t) = 0$ if $t > 0.9$,
and $\widehat \chi|_{[0.8,0.9]}$ is a polynomial of degree 5. All in all,
the solution reads $\BE \eq \chi \boldsymbol \xi_{\phi}$. The corresponding
right-hand side, $\BJ \eq -\omega^2 \BE + \curl \curl \BE$, is supported in the ring
$0.8 \leq |\bx| \leq 0.9$.

\begin{figure}
\begin{minipage}{.45\linewidth}
\begin{tikzpicture}
\begin{axis}%
[%
	width  = \linewidth,
	xlabel = {$h$ $\;$ ($p = 1$)},
	ylabel = {$\eta/\norm{\BE-\BE_h}_{\ccurl,\omega,\Omega}$},
	xmode = log,
	ymode = log,
	x dir  = reverse,
	ymax = 50,
	ymin = 0.33333333333,
	xmax = 1/8,
	xmin = 1/2048
]

\plot%
table[x expr = .8/\thisrow{N}, y expr=\thisrow{eta}/\thisrow{err}]%
{figures/data/pml_pw/P0/curve_1.00.txt}
node[pos=0.1,pin={[pin distance=.25cm]90:{$\omega = 2\pi$}}] {};

\plot%
table[x expr = .8/\thisrow{N}, y expr=\thisrow{eta}/\thisrow{err}]%
{figures/data/pml_pw/P0/curve_2.00.txt};

\plot%
table[x expr = .8/\thisrow{N}, y expr=\thisrow{eta}/\thisrow{err}]%
{figures/data/pml_pw/P0/curve_4.00.txt};

\plot%
table[x expr = .8/\thisrow{N}, y expr=\thisrow{eta}/\thisrow{err}]%
{figures/data/pml_pw/P0/curve_8.00.txt};

\plot%
table[x expr = .8/\thisrow{N}, y expr=\thisrow{eta}/\thisrow{err}]%
{figures/data/pml_pw/P0/curve_16.00.txt};

\plot%
table[x expr = .8/\thisrow{N}, y expr=\thisrow{eta}/\thisrow{err}]%
{figures/data/pml_pw/P0/curve_32.00.txt}
node[pos=0.9,pin={[pin distance=.5cm]-90:{$\omega = 64\pi$}}] {};

\end{axis}
\end{tikzpicture}
\end{minipage}
\begin{minipage}{.45\linewidth}
\begin{tikzpicture}
\begin{axis}%
[%
	width  = \linewidth,
	xlabel = {$h$ $\;$ ($p=2$)},
	ylabel = {$\eta/\norm{\BE-\BE_h}_{\ccurl,\omega,\Omega}$},
	xmode = log,
	ymode = log,
	x dir  = reverse,
	ymax = 50,
	ymin = 0.33333333333,
	xmax = 1/8,
	xmin = 1/2048
]

\plot%
table[x expr = .8/\thisrow{N}, y expr=\thisrow{eta}/\thisrow{err}]%
{figures/data/pml_pw/P1/curve_1.00.txt}
node[pos=0.1,pin={[pin distance=.125cm]90:{$\omega = 2\pi$}}] {};

\plot%
table[x expr = .8/\thisrow{N}, y expr=\thisrow{eta}/\thisrow{err}]%
{figures/data/pml_pw/P1/curve_2.00.txt};

\plot%
table[x expr = .8/\thisrow{N}, y expr=\thisrow{eta}/\thisrow{err}]%
{figures/data/pml_pw/P1/curve_4.00.txt};

\plot%
table[x expr = .8/\thisrow{N}, y expr=\thisrow{eta}/\thisrow{err}]%
{figures/data/pml_pw/P1/curve_8.00.txt};

\plot%
table[x expr = .8/\thisrow{N}, y expr=\thisrow{eta}/\thisrow{err}]%
{figures/data/pml_pw/P1/curve_16.00.txt};

\plot%
table[x expr = .8/\thisrow{N}, y expr=\thisrow{eta}/\thisrow{err}]%
{figures/data/pml_pw/P1/curve_32.00.txt}
node[pos=0.9,pin={[pin distance=1cm]-90:{$\omega = 64\pi$}}] {};

\end{axis}
\end{tikzpicture}
\end{minipage}

\begin{minipage}{.45\linewidth}
\begin{tikzpicture}
\begin{axis}%
[%
	width  = \linewidth,
	xlabel = {$h$ $\;$ ($p=3$)},
	ylabel = {$\eta/\norm{\BE-\BE_h}_{\ccurl,\omega,\Omega}$},
	xmode = log,
	ymode = log,
	x dir  = reverse,
	ymax = 50,
	ymin = 0.33333333333,
	xmax = 1/8,
	xmin = 1/2048
]

\plot%
table[x expr = .8/\thisrow{N}, y expr=\thisrow{eta}/\thisrow{err}]%
{figures/data/pml_pw/P2/curve_1.00.txt}
node[pos=0.1,pin={[pin distance=0.5cm]90:{$\omega = 2\pi$}}] {};

\plot%
table[x expr = .8/\thisrow{N}, y expr=\thisrow{eta}/\thisrow{err}]%
{figures/data/pml_pw/P2/curve_2.00.txt};

\plot%
table[x expr = .8/\thisrow{N}, y expr=\thisrow{eta}/\thisrow{err}]%
{figures/data/pml_pw/P2/curve_4.00.txt};

\plot%
table[x expr = .8/\thisrow{N}, y expr=\thisrow{eta}/\thisrow{err}]%
{figures/data/pml_pw/P2/curve_8.00.txt};

\plot%
table[x expr = .8/\thisrow{N}, y expr=\thisrow{eta}/\thisrow{err}]%
{figures/data/pml_pw/P2/curve_16.00.txt};

\plot%
table[x expr = .8/\thisrow{N}, y expr=\thisrow{eta}/\thisrow{err}]%
{figures/data/pml_pw/P2/curve_32.00.txt}
node[pos=0.6,pin=-45:{$\omega = 64\pi$}] {};

\end{axis}
\end{tikzpicture}
\end{minipage}
\begin{minipage}{.45\linewidth}
\begin{tikzpicture}
\begin{axis}%
[%
	width  = \linewidth,
	xlabel = {$h$ $\;$ ($p=4$)},
	ylabel = {$\eta/\norm{\BE-\BE_h}_{\ccurl,\omega,\Omega}$},
	xmode = log, ymode = log,
	x dir  = reverse,
	ymax = 50,
	ymin = 0.33333333333,
	xmax = 1/8,
	xmin = 1/2048
]

\plot%
table[x expr = .8/\thisrow{N}, y expr=\thisrow{eta}/\thisrow{err}]%
{figures/data/pml_pw/P3/curve_1.00.txt}
node[pos=0.1,pin={[pin distance=.5cm]90:{$\omega = 2\pi$}}] {};

\plot%
table[x expr = .8/\thisrow{N}, y expr=\thisrow{eta}/\thisrow{err}]%
{figures/data/pml_pw/P3/curve_2.00.txt};

\plot%
table[x expr = .8/\thisrow{N}, y expr=\thisrow{eta}/\thisrow{err}]%
{figures/data/pml_pw/P3/curve_4.00.txt};

\plot%
table[x expr = .8/\thisrow{N}, y expr=\thisrow{eta}/\thisrow{err}]%
{figures/data/pml_pw/P3/curve_8.00.txt};

\plot%
table[x expr = .8/\thisrow{N}, y expr=\thisrow{eta}/\thisrow{err}]%
{figures/data/pml_pw/P3/curve_16.00.txt};

\plot%
table[x expr = .8/\thisrow{N}, y expr=\thisrow{eta}/\thisrow{err}]%
{figures/data/pml_pw/P3/curve_32.00.txt}
node[pos=0.8,pin={[pin distance=.5cm]-45:{$\omega = 64\pi$}}] {};

\end{axis}
\end{tikzpicture}
\end{minipage}
\caption{PML medium: plane wave in free space}
\label{figure_pml_analytical_high}
\vspace{-0.75cm}
\end{figure}
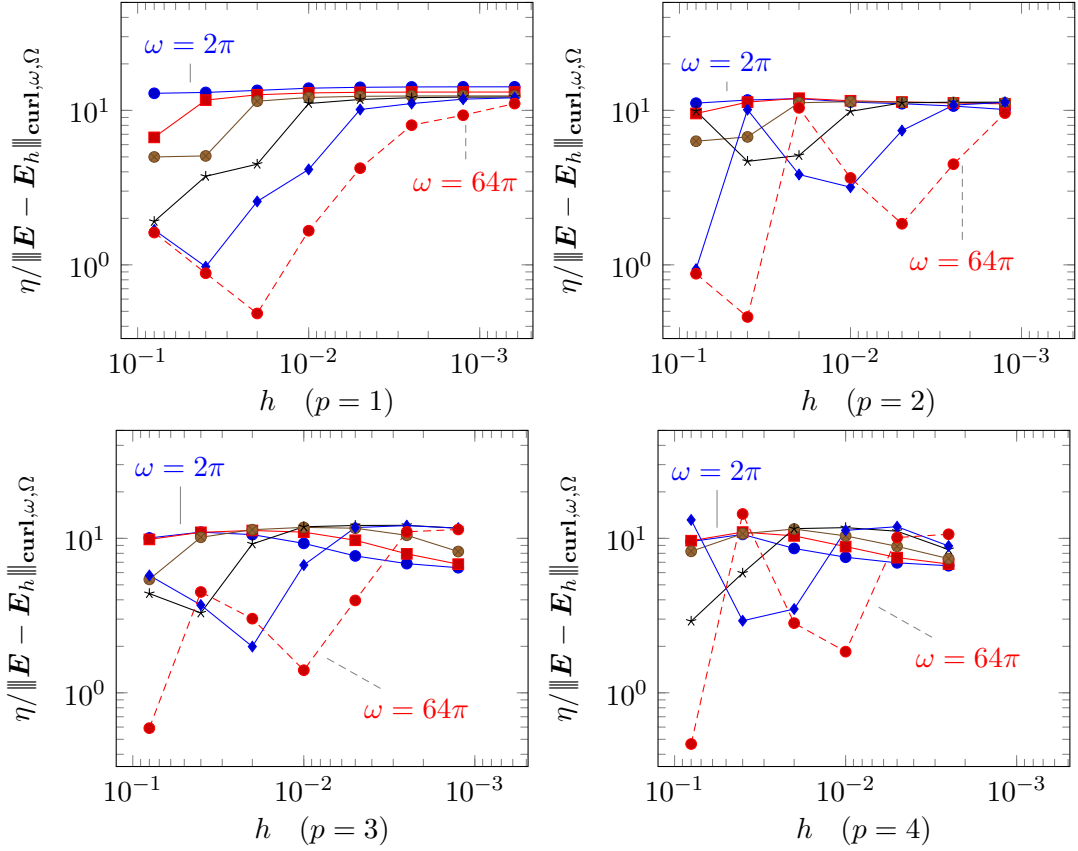

The effectivity indices obtained for different frequencies $\omega$, mesh sizes $h$
and polynomial degrees $p$ are plotted on Figure \ref{figure_pml_analytical_high}. The
conclusions are similar to the previous experiment and perfectly illustrate our analysis.
In particular, for a fixed $p$, the asymptotic effectivity index is independent of
the frequency $\omega$. We observe a preasymptotic range where the effectivity index is higher,
which corresponds to a large approximation factor $\gbaE$. As expected,
the asymptotic regime is achieved faster for higher values of $p$.

\subsection{Scattering by a penetrable obstacle}
\label{experiment_scattering}

We consider scattering by the penetrable obstacle $G \eq (-1/4,1/4)^2$. We select
$\Omega_0$, the PMLs, and $\BJ$ as in Experiment \ref{experiment_pml}, we also keep the definition
of $\ee$ and $\mu$ in $\Omega \setminus G$, but set
\begin{equation*}
\mu \eq \frac{1}{4},
\qquad
\ee \eq \left (
\begin{array}{cc}
8 &  0
\\
0 & 32
\end{array}
\right )
\end{equation*}
in $G$. Here, the analytical solution is unavailable. Given $\BE_h$, we then
compute errors compared to $\widetilde \BE$, where $\widetilde \BE$ is computed on the
same mesh as $\BE_h$ with $p = 7$. Figure \ref{figure_image_scattering} presents
the most accurate approximation of the solution computed for different frequencies.

\begin{figure}
\input{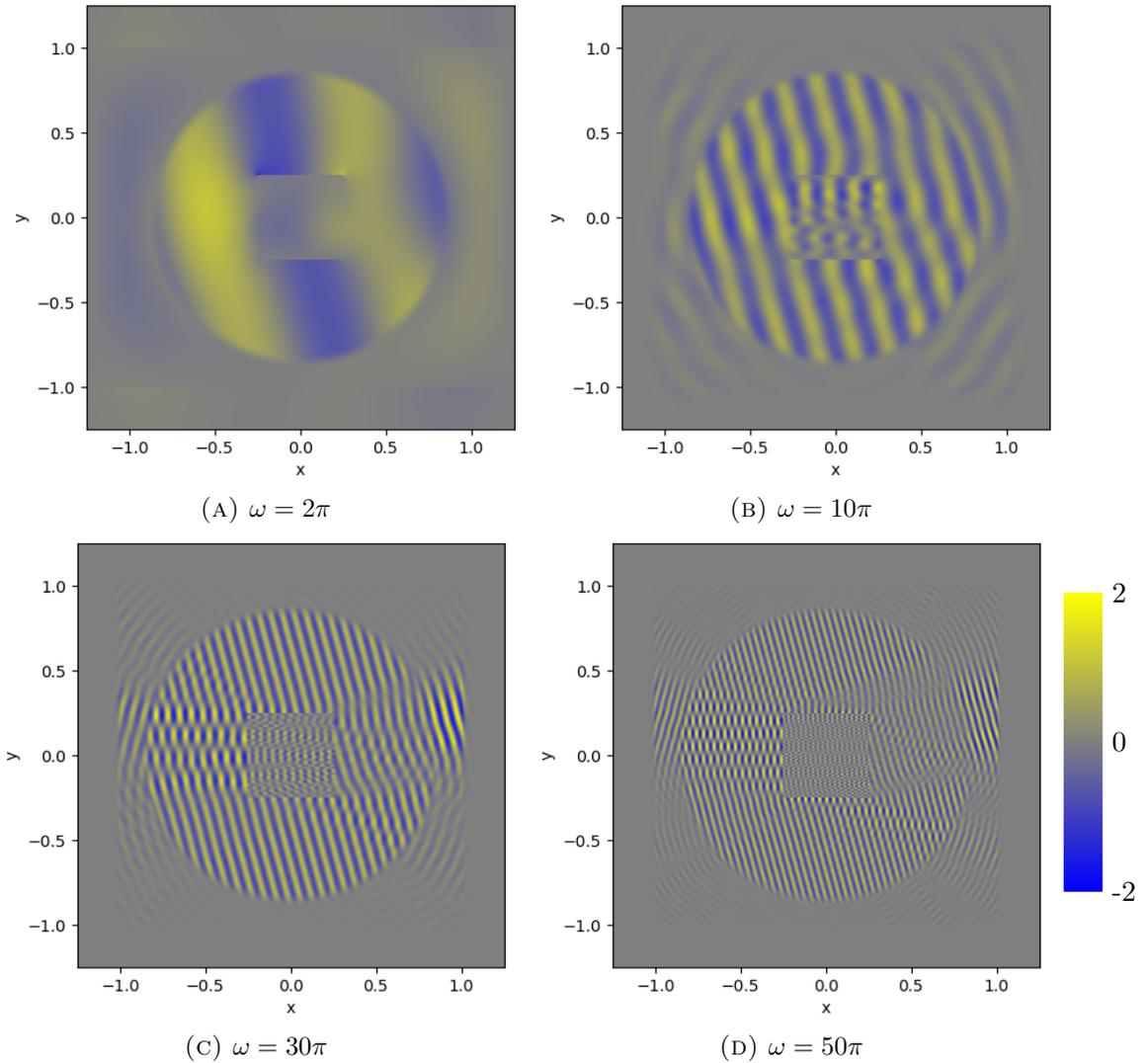}
\vspace{-0.25cm}
\caption{$\Re \widetilde \BE_2$ computed at the last iteration of the adaptive algorithm}
\label{figure_image_scattering}
\vspace{-0.75cm}
\end{figure}

The goal of this experiment is to analyze the ability of the proposed estimator
to drive an adaptive mesh refinement algorithm.  In contrast to the previous experiments, we
consider general unstructured meshes that are generated using the {\tt MMG}
software package \cite{mmg3d}. This package takes as input an already existing mesh
and a set of maximal mesh sizes associated with each vertex of the input mesh.
The output is a new mesh, locally refined so that the prescribed mesh
sizes are respected. We use the {\tt MMG} package, together with D\"orfler's marking
\cite{dorfler_1996a}, to iteratively refine the mesh.

\begin{algo}[Adaptive refinements]
\label{algo_hp}
Specifically, our adaptive algorithm is as follows:
\begin{itemize}
\item
Given a mesh $\CT_h$, compute the associated discrete solution and estimators $\eta_K$.
\item
Order the elements $K$ by decreasing values of $\eta_K$ and constitute a set $\CM \subset \CT_h$
by adding the elements in the list until \begin{equation*}
\sum_{K \in \CM} \eta_K^2 \geq \theta \sum_{K \in \CT_h} \eta_K^2
\end{equation*}
where $\theta \eq 0.1$.
\item
Associate with each element $K \in \CT_h$ a ``desired size'' $\widetilde h_K$.
This is done by setting $\widetilde h_K \eq h_K/2$ if $K \in \CM$, and
$\widetilde h_K \eq h_K$ otherwise.
\item
Associate with each vertex $\ba \in \CV_h$ a ``desired size'' $\widetilde h_{\ba}$
that is the minimum of the desired size of the elements $K \in \CT_h$ having $\ba$ as a vertex.
\item Use {\tt MMG} with $\CT_h$ and $\{\widetilde h_{\ba}\}_{\ba \in \CV_h}$ to produce
a new mesh $\widetilde \CT_h$.
\item Perform a new iteration with $\CT_h \eq \widetilde \CT_h$.
\end{itemize}
\end{algo}

On Figure \ref{figure_mesh}, we present the initial mesh, as well as the meshes
obtained after $10$ iterations of Algorithm \ref{algo_hp} for different values
of $p$ and $\omega$. In all cases, the regions selected for refinements are
understandable. On the top-right panel, the frequency is rather low, so that
the cutoff function employed to inject the incident wave is the main source
of fast oscillations. This area is clearly refined. Also, one clearly sees
that the mesh is strongly refined in the vicinity of the corners of the scatterer,
which is to be expected, due to singularities \cite{bonito_guermond_luddens_2013a,costabel_dauge_nicaise_1999a}.
In the two bottom panels, we observe that the mesh is essentially refined inside the scatterer
and coarser inside the PML. On the one hand, it is perfectly suited for the mesh to be refined
inside the scatterer since the wavespeed is smaller (hence, the solution
is more oscillatory) in this region. On the other hand, it is understandable that
the mesh is coarse in the PML, as outgoing radiations are rapidly absorbed.

\begin{figure}
\input{figures/mesh}
\vspace{-0.25cm}
\caption{Initial mesh and meshes obtained at iteration 10 of the algorithm
for different polynomial degrees and frequencies in Experiment
\ref{experiment_scattering}}
\label{figure_mesh}
\vspace{-0.75cm}
\end{figure}

Figure \ref{figure_scattering} provides more quantitative results.
On the one hand, we plot the relative error
\begin{equation*}
\text{Error}
\eq
100\ \frac{\enorm{\widetilde \BE-\BE_h}}{\enorm{\widetilde \BE}}
\end{equation*}
measured in percentage against the number of degrees of freedom $N$
at each iteration. We see that asymptotically, the error behaves as
$O(N^{-p/2})$, which is the optimal rate. This indicates that the
produced meshes are optimal, and thus, adequately refined for all the
frequencies and polynomial degrees considered. On the other hand, we also
plot the effectivity index against the iteration number. The curves we
obtain are similar to those presented above for uniform meshes,
comforting our key theoretical findings. Specifically,
the error is underestimated on coarse meshes, and this underestimation
is more pronounced for higher frequencies. Asymptotically, however, 
the effectivity index becomes independent of the frequency.

{\color{black}
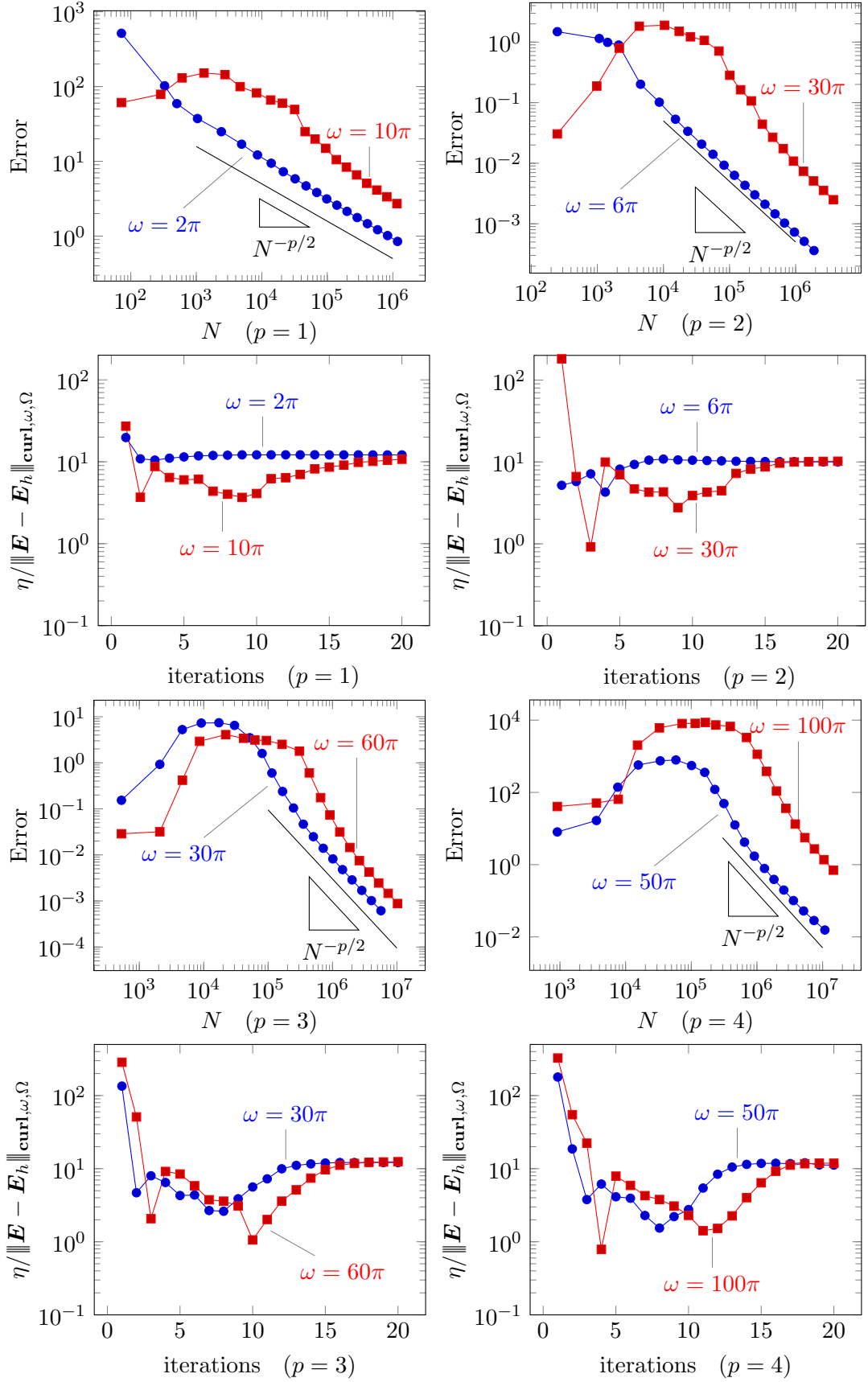
\begin{figure}
\begin{minipage}{.45\linewidth}
\begin{tikzpicture}
\begin{axis}%
[%
	width  = \linewidth,
	xlabel = {$N$ $\;$ ($p = 1$)},
	ylabel = {Error},
	xmode = log,
	ymode = log,
%	ymin = .001,
%	ymax = .5,
%	xmax = 1/8,
%	xmin = 1/2048
]

\plot%
table[x = nr_dofs, y expr=100*\thisrow{true_err}/15.8379]%
{figures/data/scattering/P0_F1/errors.txt}
node[pos=0.5,pin={[pin distance=1cm]-125:{$\omega = 2\pi$}}] {};

\plot%
table[x = nr_dofs, y expr=100*\thisrow{true_err}/74.23]%
{figures/data/scattering/P0_F5/errors.txt}
node[pos=0.9,pin={90:{$\omega = 10\pi$}}] {};

\plot[black,solid,domain=1e3:1e6] {5e2*x^(-0.5)};

\SlopeTriangle{.5}{-.15}{.2}{-.5}{$N^{-p/2}$}{}

\end{axis}
\end{tikzpicture}
\end{minipage}
\begin{minipage}{.45\linewidth}
\begin{tikzpicture}
\begin{axis}%
[%
	width  = \linewidth,
	xlabel = {$N$ $\;$ ($p = 2$)},
	ylabel = {Error},
	xmode = log,
	ymode = log,
%	ymin = .001,
%	ymax = .5,
%	xmax = 1/8,
%	xmin = 1/2048
]

\plot%
table[x = nr_dofs, y expr=\thisrow{true_err}/44.9593]%
{figures/data/scattering/P1_F3/errors.txt}
node[pos=0.5,pin={[pin distance=1cm]-125:{$\omega = 6\pi$}}] {};

\plot%
table[x = nr_dofs, y expr=\thisrow{true_err}/220.788]%
{figures/data/scattering/P1_F15/errors.txt}
node[pos=0.9,pin={[pin distance=1cm]90:{$\omega = 30\pi$}}] {};

\plot[black,solid,domain=1e4:1e6] {5e2*x^(-1)};

\SlopeTriangle{.5}{-.15}{.15}{-1}{$N^{-p/2}$}{}

\end{axis}
\end{tikzpicture}
\end{minipage}

\begin{minipage}{.45\linewidth}
\begin{tikzpicture}
\begin{axis}%
[%
	width  = \linewidth,
	xlabel = {iterations $\;$ ($p = 1$)},
	ylabel = {$\eta/\norm{\BE-\BE_h}_{\ccurl,\omega,\Omega}$},
	ymode = log,
	ymin = .1,
	ymax = 200,
%	xmax = 1/8,
%	xmin = 1/2048
]

\plot%
table[x = iter, y expr=\thisrow{eta}/\thisrow{true_err}]%
{figures/data/scattering/P0_F1/errors.txt}
node[pos=0.5,pin=90:{$\omega = 2\pi$}] {};

\plot%
table[x = iter, y expr=\thisrow{eta}/\thisrow{true_err}]%
{figures/data/scattering/P0_F5/errors.txt}
node[pos=0.4,pin=-90:{$\omega = 10\pi$}] {};

\end{axis}
\end{tikzpicture}
\end{minipage}
\begin{minipage}{.45\linewidth}
\begin{tikzpicture}
\begin{axis}%
[%
	width  = \linewidth,
	xlabel = {iterations $\;$ ($p = 2$)},
	ylabel = {$\eta/\norm{\BE-\BE_h}_{\ccurl,\omega,\Omega}$},
	ymode = log,
	ymin = .1,
	ymax = 200,
%	xmax = 1/8,
%	xmin = 1/2048
]

\plot%
table[x = iter, y expr=\thisrow{eta}/\thisrow{true_err}]%
{figures/data/scattering/P1_F3/errors.txt}
node[pos=0.5,pin=90:{$\omega = 6\pi$}] {};

\plot%
table[x = iter, y expr=\thisrow{eta}/\thisrow{true_err}]%
{figures/data/scattering/P1_F15/errors.txt}
node[pos=0.6,pin=-90:{$\omega = 30\pi$}] {};

\end{axis}
\end{tikzpicture}
\end{minipage}

\begin{minipage}{.45\linewidth}
\begin{tikzpicture}
\begin{axis}%
[%
	width  = \linewidth,
	xlabel = {$N$ $\;$ ($p = 3$)},
	ylabel = {Error},
	xmode = log,
	ymode = log,
%	ymin = .001,
%	ymax = .5,
%	xmax = 1/8,
%	xmin = 1/2048
]

\plot%
table[x = nr_dofs, y expr=\thisrow{true_err}/44.9593]%
{figures/data/scattering/P2_F15/errors.txt}
node[pos=0.55,pin={[pin distance=1cm]-125:{$\omega = 30\pi$}}] {};

\plot%
table[x = nr_dofs, y expr=\thisrow{true_err}/220.788]%
{figures/data/scattering/P2_F30/errors.txt}
node[pos=0.85,pin={[pin distance=1.5cm]90:{$\omega = 60\pi$}}] {};

\plot[black,solid,domain=1e5:1e7] {3e6*x^(-1.5)};

\SlopeTriangle{.65}{-.15}{.15}{-1.5}{$N^{-p/2}$}{}

\end{axis}
\end{tikzpicture}
\end{minipage}
\begin{minipage}{.45\linewidth}
\begin{tikzpicture}
\begin{axis}%
[%
	width  = \linewidth,
	xlabel = {$N$ $\;$ ($p = 4$)},
	ylabel = {Error},
	xmode = log,
	ymode = log,
%	ymin = .001,
%	ymax = .5,
%	xmax = 1/8,
%	xmin = 1/2048
]

\plot%
table[x = nr_dofs, y expr=100*\thisrow{true_err}/74.23]%
{figures/data/scattering/P3_F25/errors.txt}
node[pos=0.55,pin={[pin distance=1cm]-125:{$\omega = 50\pi$}}] {};

\plot%
table[x = nr_dofs, y expr=100*\thisrow{true_err}/15.8379]%
{figures/data/scattering/P3_F50/errors.txt}
node[pos=0.85,pin={[pin distance=1.3cm]90:{$\omega = 100\pi$}}] {};

\plot[black,solid,domain=3e5:1e7] {5e11*x^(-2)};

\SlopeTriangle{.6}{-.15}{.2}{-2.}{$N^{-p/2}$}{}

\end{axis}
\end{tikzpicture}
\end{minipage}

\begin{minipage}{.45\linewidth}
\begin{tikzpicture}
\begin{axis}%
[%
	width  = \linewidth,
	xlabel = {iterations $\;$ ($p = 3$)},
	ylabel = {$\eta/\norm{\BE-\BE_h}_{\ccurl,\omega,\Omega}$},
	ymode = log,
	ymin = .1,
	ymax = 500,
%	xmax = 1/8,
%	xmin = 1/2048
]

\plot%
table[x = iter, y expr=\thisrow{eta}/\thisrow{true_err}]%
{figures/data/scattering/P2_F15/errors.txt}
node[pos=0.65,pin=90:{$\omega = 30\pi$}] {};

\plot%
table[x = iter, y expr=\thisrow{eta}/\thisrow{true_err}]%
{figures/data/scattering/P2_F30/errors.txt}
node[pos=0.6,pin=-45:{$\omega = 60\pi$}] {};

\end{axis}
\end{tikzpicture}
\end{minipage}
\begin{minipage}{.45\linewidth}
\begin{tikzpicture}
\begin{axis}%
[%
	width  = \linewidth,
	xlabel = {iterations $\;$ ($p = 4$)},
	ylabel = {$\eta/\norm{\BE-\BE_h}_{\ccurl,\omega,\Omega}$},
	ymode = log,
	ymin = .1,
	ymax = 500,
%	xmax = 1/8,
%	xmin = 1/2048
]

\plot%
table[x = iter, y expr=\thisrow{eta}/\thisrow{true_err}]%
{figures/data/scattering/P3_F25/errors.txt}
node[pos=0.7,pin=90:{$\omega = 50\pi$}] {};

\plot%
table[x = iter, y expr=\thisrow{eta}/\thisrow{true_err}]%
{figures/data/scattering/P3_F50/errors.txt}
node[pos=0.65,pin=-90:{$\omega = 100\pi$}] {};

\end{axis}
\end{tikzpicture}
\end{minipage}
\caption{Scattering by a penetrable obstacle}
\label{figure_scattering}
\end{figure}
}

% \begin{figure}
% \input{figures/scattering_reg}
% \caption{PML medium: place wave in free space}
% \label{figure_scattering_reg}
% \end{figure}

\section{Conclusion}
\label{sec_conclusion}

We analyzed residual-based {\it a posteriori} error estimators for the discretization
of time-harmonic Maxwell's equations in heterogeneous media. We have focused on
(conforming) N\'ed\'elec finite element discretizations of the second-order
formulation. The novelty of our work is that we derive frequency-explicit
reliability and efficiency estimates.

Our findings generalize previous results established for scalar
wave propagation problems modeled by the Helmholtz equation. Specifically,
we establish that the efficiency constant is independent of the frequency as soon as
the number of degrees of freedom per wavelength is bounded below.
On the other hand, we show that the reliability constant is bounded independently of the frequency
for sufficiently refined meshes, but can become large at high frequencies
(or close to resonances) for coarse meshes and/or low polynomial degrees.

We presented numerical experiments that highlight these key theoretical features.
We produced three different test cases, including interior problems as well
as scattering problems with perfectly matched layers. In all cases, the behavior
of the estimator fits the theoretical predictions. Finally, the estimator has been
employed to drive an adaptive mesh refinement algorithm based on D\"orfler's marking. We
obtained optimal convergence rates in terms of the number of degrees of freedom,
indicating that the proposed estimator is perfectly suited for adaptivity
purposes.

\bibliographystyle{amsplain}
\bibliography{bibliography.bib}

\end{document}